\numberwithin{equation}{section}
\theoremstyle{plain}
\newtheorem{theorem}{Theorem}[section]
\newtheorem{proposition}[theorem]{Proposition}
\newtheorem{lemma}[theorem]{Lemma}
\theoremstyle{definition}
\newtheorem{definition}[theorem]{Definition}
\newtheorem{example}[theorem]{Example}
\def\eps{\varepsilon}
\newcommand{\bz}{{\mathbf Z}}
\newcommand{\dZ}{\bz}
\newcommand{\PAR}[1]{{{\left(#1\right)}}}
\newcommand{\INT}{\mathrm{in}}
\begin{document}

\title{\sc\bf\large\MakeUppercase{
Shotgun Assembly of Random Jigsaw Puzzles}}
\author{
Charles Bordenave\thanks{CNRS and Universit\'e Toulouse III; \href{mailto:bordenave@math.univ-toulouse.fr}{bordenave@math.univ-toulouse.fr}} 
\and
Uriel Feige\thanks{Weizmann Institute; \href{mailto:uriel.feige@weizmann.ac.il}{uriel.feige@weizmann.ac.il}}
\and
	Elchanan Mossel\thanks{University of California, Berkeley and University of Pennsylvania; \href{mailto:mossel@wharton.upenn.edu}{mossel@wharton.upenn.edu. Research supported by NSF grant
CCF-1320105, DOD ONR grant N00014-14-1-0823, and Simons Foundation grant 328025.}}
}
\date{\today}
\maketitle

\begin{abstract}
In a recent work, Mossel and Ross considered the shotgun assembly problem for a random jigsaw puzzle. Their model consists of a puzzle - an $n\times n$ grid,  where each vertex is viewed as a center of a piece. They assume that each of the four edges adjacent to a vertex, is assigned one of $q$ colors (corresponding to "jigs", or cut shapes) uniformly at random.
Mossel and Ross asked: how large should $q = q(n)$ be so that with high probability the puzzle can be assembled uniquely given the collection of individual tiles?
They showed that if $q = \omega(n^2)$, then the puzzle can be assembled uniquely with high probability, while if $q = o(n^{2/3})$, then with high probability the puzzle cannot be uniquely assembled. Here we improve the upper bound and show that for any $\eps > 0$, the puzzle can be assembled uniquely with high probability if $q \geq n^{1+\eps}$.
The proof uses an algorithm of $n^{\Theta(1/\eps)}$ running time.
\end{abstract}

\section{Introduction}

\cite{MR15} recently suggested the following problem:
Consider a factory that manufactures jigsaw puzzles. 
The factory aims to make sure that a unique assembly of the puzzle is guaranteed just from the way the pieces are cut, regardless of whether the images on the puzzle are informative (e.g., even if there is a large patch of sky).
Suppose that there are $q$ different type of jigs (cut shapes between adjacent pieces), that the puzzle is of size $n \times n$, and that the type of jig between any two adjacent pieces is selected at random. {\em How large should $q$ be so that a random puzzle drawn from this distribution has unique assembly?}
This problem, which they called "shotgun assembly of random jigsaw puzzle",  
 is a two dimension variant of the well studied problem of shotgun assembly of DNA sequences, which is extensively studied from both the combinatorial and probabilistic view points, see e.g. ., \cite{Arratia1996}, \cite{Dyer1994}, and \cite{Motahari2013} 

Let us present the above question in a formal manner where the puzzle will be defined as the $n$ by $n$ grid graph with a uniform $q$ coloring of the edges of the grid. From now on we will use the graph theoretic notion of color instead of jig (cut shape, also referred to as``knobs",``locks",``tabs", ``slots", ``indents" etc. in the jigsaw puzzle terminology). 
The parameters for our model are two positive integers, $n$ and $q$. We use the notation $[m]$ to denote the set of numbers $\{1, \ldots, m\}$, and $[a,b]$ to denote the set $\{a,a+1,\ldots,b-1,b\}$. A puzzle may be thought of as an $n$ by $n$ grid with colored edges. The building blocks of the puzzle are {\em pieces}  - i.e., vertices of the grid along with $4$ adjacent colored half edges.  
Observe that every vertex not on the boundary of the grid is incident with exactly~4 edges. We assume for simplicity of the presentation (this will not significantly effect the results in the current manuscript) that also every vertex on the boundary is incident with~4 edges. This involves introducing boundary edges that lead out of the grid and do not have vertices at their other endpoint. We further assume for simplicity  that at any given vertex $v$, the edges incident with it are labeled by their orientation: Up, Down, Right and Left and denoted $\uparrow(v)$, $\downarrow(v)$, $\rightarrow(v)$ and $\leftarrow(v)$. We denote by $\sigma$ the coloring, so that the colors incident to $v$ are 
$\sigma(\uparrow(v))$, $\sigma(\downarrow(v))$, $\sigma(\rightarrow(v))$ and $\sigma(\leftarrow(v))$. 
Each edge (including the boundary edges) is given a random color in $[q]$ (corresponding to the type of jig being used), uniformly at random and independently across edges. Thereafter, the puzzle is disassembled, and its pieces are presented at a random order. At this point, the input is $n^2$ pieces, where each piece is a vertex with~4 incident edges labeled as Up, Down, Right and Left, and colored by colors from $[q]$. An {\em assembly} of the pieces is a placement of the vertices on an $n$ by $n$ grid, where for each vertex the edges are oriented in the direction of their labels. The assembly is {\em feasible} if for every two adjacent vertices the colors that they have for their common edge are the same. We refer to the assembly that gives back the original puzzle as the {\em planted assembly}.

We say that a puzzle has {\em unique vertex assembly} if it has only one feasible assembly, namely, the planted assembly. We say that a puzzle has {\em unique edge assembly} if for every feasible assembly and for every edge location (not including boundary edges), the color of the respective edge is the same as in the planted assembly.
Note that a puzzle with two identical pieces will not have unique vertex assembly, but it may have unique edge assembly.

Since the probability of having each type of piece is $q^{-4}$, it follows by the birthday paradox that two identical pieces exist with high probability as soon as $q = o(n)$, and in this case the puzzle does not have unique vertex assembly. It is further shown in~\cite{MR15} that if $q = o(n^{2/3})$ then with high probability a random puzzle will not have unique edge assembly.
\cite{MR15}~ further provided a linear time algorithm for unique vertex assembly when
$q \geq C n^2$ for a sufficiently large constant $C$.

One of the main open problem of~\cite{MR15} was to obtain more accurate bounds for the jigsaw assembly problem. Here we improve the upper bound by proving the following:

\begin{theorem}
\label{thm:main}
For every $\eps > 0$, if $q \geq n^{1+\eps}$ then with high probability a random puzzle has unique vertex assembly. Moreover, there is an algorithm running in time $n^{O(1/\eps)}$ that with high probability finds the planted assembly.
\end{theorem}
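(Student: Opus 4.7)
The plan is to set $k=\lceil C/\eps\rceil$ for a suitable constant $C>0$, design a greedy row-by-row algorithm that commits to one piece at a time using a $k$-step look-ahead, and prove a structural lemma ensuring that (with high probability) every such look-ahead admits a unique planted continuation. The running time will be dominated by the brute-force enumeration over the look-ahead, giving $n^{O(k)}=n^{O(1/\eps)}$ in total.

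\textbf{Algorithm.}
Begin from the top-left corner, which is the unique piece matching the two planted boundary colors at $(1,1)$ with high probability (since the expected number of alternative candidates is $n^2/q^2\le n^{-2\eps}$). Thereafter, process positions in row-major order. At position $(i,j)$, the north color is known from the piece at $(i-1,j)$ and the west color is known from the piece at $(i,j-1)$ or from the grid boundary. Enumerate all candidate pieces $p_0$ matching these two colors, and for each of them brute-force enumerate all $(k+1)$-tuples $(p_0,p_1,\ldots,p_k)$ of distinct pieces that can be placed at $(i,j),(i,j+1),\ldots,(i,j+k)$ consistently with the planted north colors at those positions (from row $i-1$) and with the chained matches $E(p_\ell)=W(p_{\ell+1})$. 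To rule out the degenerate single-mistake case at the rightmost look-ahead index, the look-ahead additionally checks the vertical match $S(p_\ell)=\sigma(\downarrow(i,j+\ell))$ for $\ell$ near the end of the window, using the partially-known south boundary. Accept the (unique) candidate for which some extension exists.

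\textbf{Structural lemma and proof sketch.}
We claim that with high probability, for every $(i,j)$, the planted tuple is the only $(k+1)$-tuple of distinct pieces satisfying the above constraints. The proof is a union bound. Fix $(i,j)$ and an alternative tuple differing from the planted one in a set $T\subseteq\{0,\ldots,k\}$ of indices with $0\in T$. Every $\ell\in T$ contributes at least two independent $1/q$-constraints (its north match and its chained west match), and the top-left index contributes two more (its own west and east matches to the planted boundary/neighbour), giving a total of at least $2|T|+2$ independent color-equality constraints among the mutually independent uniform edge colors of the chosen pieces. Bounding the number of alternative tuples with $|T|=t$ by $\binom{k}{t-1}(n^2)^t$ and summing,
\[
\mathbb{E}[\#\text{alt. tuples at }(i,j)]\le\sum_{t\ge1}\binom{k}{t-1}\frac{n^{2t}}{q^{2t+2}}
\le n^{-2-4\eps}\bigl(1+n^{-2\eps}\bigr)^{k}=O(n^{-2-4\eps}),
\]
assuming $k\le n^{2\eps}$. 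Summing over the $O(n^2)$ grid positions gives $O(n^{-4\eps})=o(1)$.

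\textbf{Main obstacle.}
The delicate point is the case $|T|=1$ at the rightmost look-ahead index, where only two color constraints are naturally available (north and a single chained west) because the east side of the rightmost piece is unconstrained; this forces us to either include the partially-known south boundary in the check or to argue that any such local mistake necessarily propagates to the next row, where the algorithm detects and rejects it. Handling the resulting bookkeeping (including the subcase where an alternative filling reuses a boundary piece, which makes some constraints internal equalities between two edges of the same piece rather than between edges of different pieces) is the main technical content of the union bound; in each subcase the constraint still holds with probability $1/q$, and the estimate survives.
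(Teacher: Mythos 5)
Your approach replaces the paper's two-dimensional $(2k+1)\times(2k+1)$ window (Theorem~\ref{thm:k_bound}) with a one-dimensional $k$-step look-ahead within a single row, and this genuinely different route has a gap that I do not think can be patched without moving to a 2D window. The decisive case is $T=\{0\}$: a single wrong piece $w_0\ne(i,j)$ while $p_1,\dots,p_k$ are all planted. The constraints your look-ahead imposes on $w_0$ are (a) the north match against the committed row $i-1$, (b) the west match against the committed piece at $(i,j-1)$, and (c) the east match against the planted $p_1=(i,j+1)$. That is \emph{three} $1/q$-constraints, not four; the fourth side of $w_0$ (south) is simply unconstrained because row $i+1$ is not yet placed. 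There is no ``partially-known south boundary'' for an interior row, so the proposed extra vertical check does not exist, and ``detect and reject in the next row'' would require backtracking that your greedy algorithm does not have. Consequently the correct per-position bound is $n^2/q^3 = n^{-1-3\eps}$ for this case, which summed over $\Theta(n^2)$ positions gives $n^{1-3\eps}$; this does not vanish for $\eps<1/3$. Your stated count $2|T|+2$ is therefore off by one in exactly the dominant term, and the displayed estimate $\sum_t\binom{k}{t-1}n^{2t}q^{-2t-2}$ does not bound the true expected number of bad candidates. The paper's 2D window is not a convenience but a necessity here: a single displaced piece sitting in a square window has all four sides constrained, yielding $\gamma=4$ (the last case of Lemma~\ref{lem:iso_main}), which is exactly what makes the $n^{-2-\Theta(\eps)}$ bound close.

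A secondary issue is the treatment of dependencies. You acknowledge that some alternative tuples re-use the original adjacency structure, but the claim that ``in each subcase the constraint still holds with probability $1/q$'' is false when a constraint compares an edge to itself --- e.g.\ a row shifted rigidly by one position makes every chained east--west match trivially satisfied, collapsing $|T|=k+1$ supposed constraints down to $k+2$ real ones. The number of such configurations is also far below $(n^2)^{k+1}$, so a valid argument must trade off the number of genuinely random constraints against the number of configurations achieving that count. This trade-off is precisely what the paper's constraint graph (Definition and Proposition~\ref{prop:constraint}) and isoperimetric analysis (Lemmas~\ref{lem:iso_simple}--\ref{lem:iso_main}) formalize, via tiles and boundary length. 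Finally, the paper also needs a separate argument to assemble the outer $k$ shells (where no full window fits); that part is absent from your sketch.
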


Here and elsewhere, the expression ``with high probability" means with probability going to $1$ as $n \to \infty$. We will write $C(k)$ for a constant depending on $k$ only. The value of $C(k)$ at different occurrences will be different. 

The proof of Theorem~\ref{thm:main} is based on the following principle. For a given integer parameter $k > 1$ (where $k$ is a constant independent of $n$), we refer to a $2k+1$ by $2k+1$ grid as a {\em window}, and index it by $[-k,k] \times [-k,k]$. Given an input of $n^2$ pieces,
for each piece $v$,  we consider all possible sets of $(2k + 1)^2$ pieces (including $v$ itself) and check if they can be assembled as a feasible (namely, legally colored) window with $v$ at its center. A feasible assembly of a window with $v$ at its center will be referred to as a $v$-window. Given a $v$-window, the neighborhood $\{ (0,\pm 1), (\pm 1, 0)\}$ of $v$ in the $v$-window is considered to be a {\em candidate neighborhood} (or in more details, an ${\ell}_{1}$ radius $1$ candidate neighborhood) of $v$ in the puzzle.

For every vertex $v$ there might be several different $v$-windows, and hence several candidate neighborhoods. Nevertheless, for a choice of $k = O(1/\eps)$ we show that with high probability for every vertex  at distance at least $k + 1$ from the boundary of the puzzle, its $\ell_1$ radius $1$ candidate neighborhood is unique.  Consequently, this rigidity allows us to assemble the part of the puzzle at distance $k+1$ from the boundaries of the puzzle. A simple algorithm then allows to assemble the rest of the puzzle.

The paper is organized as follows. In Section \ref{sec:loc}, we formalize the above notion and state our main result on the $v$-window. In Section \ref{sec:constraint}, we translate in graphical terms the problem of feasibility of an assembly. Section \ref{sec:iso} contains our isoperimetric analysis and Section \ref{sec:algo} describes the reconstruction algorithm.  Finally, Section \ref{sec:fin} discusses the extension where jigs have shapes instead of colors and can be rotated. 

\section{Local Assembly}
\label{sec:loc}
For vertex $v \in [n]^2$, let $S_k(v)$ denote the set of injective maps $f : [-k,k]^2 \to [n]^2$ such that
\begin{itemize}
\item
$f(0,0) = v$ and
\item $( f(i,j) : -k \leq i \leq k, -k \leq j \leq k )$ is {\em feasible}, that is
$\sigma(\rightarrow(f(i,j))) = \sigma(\leftarrow(f(i+1,j)))$ for all $i,j$ s.t. $(i,j),(i+1,j) \in [-k,k]^2$,
and $\sigma(\uparrow(f(i,j))) = \sigma(\downarrow(f(i,j+1)))$ for all $i,j$ s.t. $(i,j),(i,j+1) \in [-k,k]^2$
\end{itemize}
Note that $S_k(v)$ may be empty if $v$ is of distance less than $k$ from the boundaries of the grid. Otherwise, $S_k(v)$ contains at least one element, namely the one given by $f(x) = v + x$ for all $x \in [-k,k]^2$.

The main theorem we wish to prove is the following:

\begin{theorem} \label{thm:k_bound}
There exists $c > 0$ such that for
all $\eps > 0$, if $k \geq c/\eps$ then the following holds:
For every $v \in [n]^2$ and for every
$\alpha \in \{ (0, \pm 1), (\pm 1, 0) \}$
\[
P[ \exists f \in S_k(v) \mbox{ s.t. } f(\alpha) \neq v + \alpha] \leq C(k) n^{-2-\eps/2}.
\]
\end{theorem}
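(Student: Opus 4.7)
The plan is a union bound over all injective maps $f \colon W \to [n]^2$ (with $W := [-k,k]^2$, $f(0,0) = v$, and $f(\alpha) \neq v + \alpha$):
\ban{
P\bigl[\exists f \in S_k(v):\ f(\alpha) \neq v + \alpha\bigr] \;\leq\; \sum_{f} P[f \text{ is feasible}].
}
For each such $f$, define the displacement $d_f(x) := f(x) - v - x$, and call a grid edge $\{x,x'\}$ of $W$ \emph{green} if $d_f(x) = d_f(x')$ and \emph{red} otherwise. The green edges partition $W$ into $r$ maximal connected clusters of constant displacement, and I write $b$ for the number of red edges. Because $d_f(0,0) = 0 \neq d_f(\alpha)$, we have $r \geq 2$ and $b \geq 1$. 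Each green equality is automatic (both sides refer to the same puzzle edge), while each red edge equates two distinct puzzle edges in $[n]^2$. Encoding the $b$ red equalities as a \emph{constraint graph} on puzzle edges (to be made precise in Section~\ref{sec:constraint}) gives $P[f \text{ is feasible}] \leq q^{-\rho(f)}$, where $\rho(f)$ is the cyclomatic rank (vertices minus connected components) of the constraint graph.

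Next, I group the $f$'s by their \emph{skeleton}: the partition of $W$ into clusters together with the identity of the cluster containing $(0,0)$. For a skeleton with $r$ clusters, the number of compatible $f$'s is at most $(n^2)^{r-1}$, since each of the $r-1$ non-distinguished clusters can be placed freely inside $[n]^2$. As the number of skeletons depends only on $k$, it is bounded by some $C(k)$. Hence
\ban{
\sum_{f} P[f \text{ feasible}] \;\leq\; C(k) \max_{\text{skeleton}} n^{2(r-1)}\, q^{-\rho}.
}
With $q \geq n^{1+\eps}$, the target bound $\leq C(k)\, n^{-2-\eps/2}$ is equivalent to the isoperimetric estimate
\ban{
(1 + \eps)\, \rho \;\geq\; 2r + \tfrac{\eps}{2} \quad \text{for every valid skeleton.}
}

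Establishing this estimate is the main content of Section~\ref{sec:iso}. Two extreme cases already explain where the hypothesis $k \geq c/\eps$ comes from. When $r = 2$ with the cluster of $(0,0)$ a single vertex, one has $b = 4$ and $\rho = 4$, and the inequality holds for every $\eps > 0$, since the minimum edge-cut between two adjacent interior vertices of the planar grid $W$ is $4$. At the opposite extreme, the all-singleton partition has $r = (2k+1)^2$ and $b = 4k(2k+1)$, so $2r - b = 2(2k+1)$, and the inequality holds precisely when $(2k+1)\eps \gtrsim 1$, i.e.\ $k \gtrsim c/\eps$. The hard part will be to verify the inequality uniformly across all intermediate skeletons (via a grid-isoperimetric bound showing $b \gtrsim 2r - O(k)$), and to control possible \emph{coincidences} in the constraint graph: situations where two distinct red edges land on the same puzzle edge of $[n]^2$, which could in principle make $\rho < b$. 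Ruling these coincidences out requires a careful geometric analysis of how the boundary puzzle edges of different clusters can overlap, using both the planarity of $W$ and the size assumption $k \geq c/\eps$.
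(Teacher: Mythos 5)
Your high-level plan matches the paper's: a union bound over $f$, a constraint graph on puzzle edges with $P[f\text{ feasible}]\le q^{-\rho}$ where $\rho=|V|-c$, and an isoperimetric lower bound on $\rho$. The constraint-graph step is identical to Lemma~\ref{lem:constraint}, and your two ``extreme'' checks correctly isolate where the $k\gtrsim 1/\eps$ hypothesis enters.

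However, there is a genuine gap in your counting, and it propagates into an isoperimetric claim that is actually \emph{false} for the full window. You group the $f$'s by a \emph{domain-side} object (the partition of $W=[-k,k]^2$ into maximal constant-displacement clusters), giving at most $(n^2)^{r-1}$ maps per skeleton, and then ask for $(1+\eps)\rho\ge 2r+\eps/2$. Consider $f$ equal to the planted map except that it swaps $(1,0)$ and $(2,0)$: here $r=3$ (two singleton clusters plus the rest), and a direct count of the constraint graph gives $\rho=4$ (a $3$-cycle plus two double edges); your required inequality $(1+\eps)\cdot4\ge 6+\eps/2$ fails for all small $\eps$, and the resulting bound $n^{2(r-1)}q^{-\rho}\le n^{-4\eps}$ does not beat $n^{-2-\eps/2}$. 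The deeper problem is that $n^{2(r-1)}$ grossly overcounts: when several domain clusters map into a single connected patch of $[n]^2$, the actual number of such $f$'s is only $C(k)$, not $n^{\Theta(r)}$. The paper instead counts by \emph{image tiles} (connected components of $f(W)$ in $[n]^2$), giving $C(k)\,n^{2t}$ with $t+1$ tiles — and in the swap example $t=0$, so this count is $C(k)$ and the union bound closes.

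The second missing ingredient is the freedom to pass to a \emph{sub-window} $0\in W\subsetneq[-k,k]^2$. The paper's Lemma~\ref{lem:iso_main} is a case analysis: when the number of tiles is large, or when some second tile is large, the isoperimetric inequality from Lemma~\ref{lem:iso_simple} suffices on the full window; but when only a handful of small tiles are scattered near the center, the constraints and the exponent $t$ must be computed on a carefully chosen local window (the connected component of the anomalous piece in $[-k,k]^2\setminus f^{-1}(T_0)$ plus its vertex boundary) to avoid counting far-away tiles that add to $t$ without adding commensurately to $\gamma$. Your one-line estimate ``$b\gtrsim 2r-O(k)$'' has no analogue of this refinement. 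Also, the paper does not ``rule out'' coincidences (two constraints sharing a puzzle edge); it quantifies them via Proposition~\ref{prop:constraint}, $\gamma\ge\tfrac12(w+u)$ where $u$ counts constraints with a vertex of degree one — this is what lets cycles and double edges be tolerated rather than excluded, and it is precisely what makes the swap example harmless once the correct (tile-based, sub-windowed) count is used.
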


Theorem~\ref{thm:k_bound} is the main result needed to prove that with high probability 
 all vertices at distance at most $k$ from the boundaries can be assembled correctly. A simple algorithm then allows to construct the reminder of the puzzle. This will allow us to establish  Theorem \ref{thm:main}.


\section{The Constraint Graph}
\label{sec:constraint}
The proof of Theorem \ref{thm:k_bound} is based on a detailed analysis of the constraints imposed by the condition that an injective function $f [-k,k]^2 \to [n]^2$ is feasible, along with isoperimetric reasoning in order to lower bound the number of constraints.

To simplify notation we write $(i,j : j+1)$ for the edge $((i,j),(i,j+1))$. 
 Similarly we write $(i:i+1,j) := ((i,j),(i+1,j))$. 
Note that by definition
\[
\rightarrow(i,j) = (i:i+1,j) = \leftarrow(i+1,j), \quad
\uparrow(i,j) = (i,j:j+1) = \downarrow(i,j+1).
\]

Sometimes it would be more useful to analyze the constraints imposed by $f$ on a subset of $[-k,k]^2$. This leads to the following definitions:

\begin{definition}
For a given $f : [-k,k]^2 \to [n]^2$, and $W \subset [-k,k]^2$, the {\em restriction of $f$ to $W$},
denoted $f_{|W}$, is the function $f_{|W} : W \to [n,n]^2$ defined by $f_{|W}(w) = f(w)$, for all
$w \in W$.

Given $f : [-k,k]^2 \to [n]^2$ and $W \subset [-k,k]^2$, the {\em tiles} of $f_{|W}$, denoted $T(f_{|W})$ is the collection of connected components of the graph with vertex set $f(W)$ and where vertices $v,w$ are adjacent if $v - w \in \{\pm (0,1), \pm (1,0) \}$.
We write $T(f)$ for $T(f_{| [-k,k]^2})$ and call $T(f)$ the tiles of $f$. 
\end{definition}
Note that the tiles are defined in terms of the image of the map $f$. 


\begin{definition}
The constraint graph $G(f_{|W}) = (V,E)$ of $f_{|W}$ for $f : [-k,k]^2 \to [n]^2$
is the graph whose  whose edge set $E$ consists of 
\begin{align*}
(\rightarrow(f(u)),\leftarrow(f(u+(1,0))) &\,&  \quad \mbox{ if } f(u + (1,0)) \neq f(u) + (1,0)
&\,& \mbox{ and } u,u+(1,0) \in W,
\\
(\uparrow(f(u)),\downarrow(f(u+(0,1))) &\,&  \quad \mbox{ if } f(u + (0,1)) \neq f(u) + (0,1)
&\,& \mbox{ and } u,u+(0,1) \in W.
\end{align*}
The vertex set $V$ of $G(f_{|W}))$ is the set of all edges of $[n]^2$ spanned by $E$. 
The constraint graph of $f$ is the constraint graph of $f_{|W}$ for $W = [-k,k]^2$.
We write $c(f_{|W})$ for the number of connected
components of $G$ and $\gamma(f_{|W}) = |V| - c(f_{|W})$.
We will omit the subscript $W$ when $W = [-k,k]^2$.
\end{definition}

Consider a candidate $f : [-k,k]^2 \to [n]^2$. 
We say that an edge $((i:i+1,j), (i':i'+1,j'))$ of the constraint graph $G(f)$ is {\em satisfied} if $\sigma((i:i+1,j)) = \sigma((i':i'+1,j'))$ and similarly for an edge
$((i,j:j+1),(i', j' : j' +1))$. 
We say that $G(f_{|W})$ is {\em satisfied} if all of its edges are satisfied. 
To distinguish the vertices and edges of the grid from those of $G$, we will sometime write explicitly $G$-vertices and $G$-edges and 
grid-vertices and grid-edges.

\begin{lemma} \label{lem:constraint}
$f_{|W}$ is feasible iff  $G(f_{|W})$ is satisfied . Moreover, for a fixed $f : [-k,k]^2 \to [n]$ and $W \subset [-k,k]^2$, the probability that $f_{|W}$ is feasible for a random puzzle 
is $q^{-\gamma(f_{|W})}$.
\end{lemma}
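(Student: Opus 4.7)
Both claims are essentially bookkeeping, so the plan is to unpack the two definitions carefully and line them up. The only subtle point is to account for the edge case where $f$ locally preserves adjacency, so that the corresponding color constraint is trivially true (same grid-edge on both sides) and was deliberately omitted from $G(f_{|W})$.

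\textbf{First equivalence: feasibility iff $G(f_{|W})$ is satisfied.} I would unpack feasibility by going over each pair $u, u+(1,0) \in W$ (and similarly for $u, u+(0,1)$) and splitting into two cases. If $f(u+(1,0)) = f(u) + (1,0)$, then by the identity $\rightarrow(f(u)) = (f(u):f(u)+(1,0)) = \leftarrow(f(u)+(1,0)) = \leftarrow(f(u+(1,0)))$, the two half-edges are literally the same grid-edge, so $\sigma(\rightarrow(f(u))) = \sigma(\leftarrow(f(u+(1,0))))$ holds automatically, and no edge is added to $G(f_{|W})$. If $f(u+(1,0)) \neq f(u) + (1,0)$, then exactly one $G$-edge is placed between $\rightarrow(f(u))$ and $\leftarrow(f(u+(1,0)))$, and its satisfaction is precisely the required color equality. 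The same analysis applies verbatim to the vertical pairs. Running over all pairs gives the equivalence.

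\textbf{Probability computation.} Fix $f$ and set $G = G(f_{|W}) = (V,E)$. The vertex set $V$ is a finite collection of grid-edges of $[n]^2$, and since colors are i.i.d.\ uniform on $[q]$ across all grid-edges, the restriction $(\sigma(e))_{e \in V}$ is uniform on $[q]^V$. By the first part, the probability in question is the probability that every $G$-edge is monochromatic, i.e., that $\sigma$ is constant on every connected component of $G$. A uniform coloring of $V$ is constant on each component iff for every component one picks a color and assigns it to all its members; the number of such colorings equals $q^{c(f_{|W})}$ (one free color per component), out of $q^{|V|}$ total colorings. Hence the probability is $q^{c(f_{|W}) - |V|} = q^{-\gamma(f_{|W})}$.

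The expected main obstacle is essentially nil; the only thing to be careful about is not double-counting a grid-edge and making sure that the ``trivial'' constraints (those arising when $f$ does preserve the adjacency) are correctly identified with identities of grid-edges rather than with extra constraints, so that they neither add to $|V|$ nor affect $c$.
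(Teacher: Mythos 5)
Your proposal is correct and takes essentially the same approach as the paper: the first equivalence is a direct unpacking of definitions (which the paper dispatches in one sentence), and the probability computation is identical in substance — you count the monochromatic-per-component colorings ($q^{c(f_{|W})}$ out of $q^{|V|}$), while the paper multiplies the per-component probabilities $q^{-(c-1)}$ using independence across components; both give $q^{-\gamma(f_{|W})}$. Your explicit handling of the trivial case $f(u+(1,0)) = f(u)+(1,0)$, where the two half-edges coincide as a single grid-edge and no $G$-edge is created, is a useful clarification that the paper leaves implicit.
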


\begin{proof}
The first statement follows from the definitions. For the second statement we will compute the probability that 
$G(f_{|W})$ is satisfied. For $G(f_{|W})$ to be satisfied, it is required that the color of $G$-vertices of $G(f_{|W})$ (grid-edges) in each connected component are identical. Note that events for different components are independent and the probability that a certain component $C$ has all $G$-vertices of the same color is $q^{-c+1}$ where $c$ is the number of $G$-vertices in $C$. 
The conclusion follows.
\end{proof}

Note that the degree of each $G$-vertex of is either $1$ or $2$. Therefore the connected components of $G(f_{|W})$ are either paths 
or cycles. 

\begin{example} \label{ex:local}
Let $W = [1,2] \times [1,2]$ and let $g = f_{|W}$ be defined by
\[
g(1,1) = (1,1), \quad g(1,2) = (3,2), \quad g(2,1) = (3,1), \quad g(2,2) = (1,2).
\]
In this case, the map $f_{|W}$ has $2$ tiles, namely \{(1,1),(1,2)\}, \{(3,1),(3,2)\}. The constraint graph is the graph with the following edges:
\[
((1,1:2),(3,1:2)), \quad ((1,1:2),(3,1:2)), \quad ((1:2,1),(2:3,1)), \quad ((3:4,2),(1 : 0,2))
\]
Note that the first edge is a double edge as it is imposed both by the adjacencies of
$(1,1)$ to the left of $(3,2)$ and of $(3,1)$ to the right of $(1,2)$.
The vertex set $V$ of $G(f)$ consists of
\[
(1,1:2),(3,1:2),(1:2,1),(2:3,1),(3:4,2),(1 : 0,2)
\]
and is of size $6$.
The connected components of $G(f_{|W})$ are given precisely by the $3$ edges.
Thus $|V| = 6$, the number of connect components is $3$ and the probability that $f_{|W}$ is feasible is $q^{3-6} = q^{-3}$.
see Figure \ref{fig:2} (right).
\end{example}

\begin{figure}[htb]
\centering \scalebox{0.6}{\input{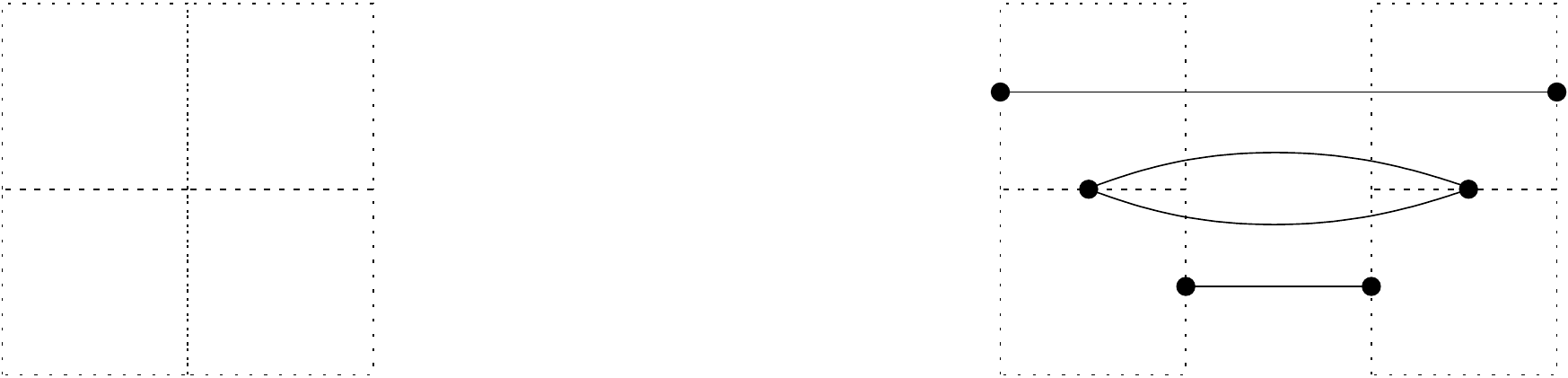_t}}
\caption{The local assembly in Example \ref{ex:local} (left) and its constraint graph (right).}\label{fig:2}
\end{figure}

\begin{proposition} \label{prop:constraint}
 Let $u(f_{|W})$ denote the number of constraints of $G(f_{|W})$ containing a $V$-vertex 
  that appears once in all constraints and let $w(f_{|W})$ denote the total number of constraints.
Then $\gamma(f_{|W}) \geq u(f_{|W}) + 0.5 (w(f_{|W})-u(f_{|W})) = 0.5 w(f_{|W}) + 0.5 u(f_{|W})$.
\end{proposition}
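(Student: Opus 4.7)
The plan is to decompose the constraint graph into its connected components and verify the inequality on each component, then sum. Since (as noted in the paper right before the example) every $G$-vertex has degree $1$ or $2$, each connected component of $G(f_{|W})$ is either a simple path or a cycle. Let $\gamma_C$, $w_C$, $u_C$ denote the contributions of a component $C$ to $\gamma$, $w$, $u$ respectively, so the claim reduces to showing $\gamma_C \geq 0.5 w_C + 0.5 u_C$ for each $C$.

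First I would handle paths. A path $C$ with $\ell$ edges has $\ell+1$ $G$-vertices and contributes $1$ to the number of components $c(f_{|W})$, so $\gamma_C = (\ell+1) - 1 = \ell$ and $w_C = \ell$. The two endpoints are the only degree-$1$ $G$-vertices, so the edges containing a degree-$1$ vertex are exactly the two end-edges when $\ell \geq 2$ and the single edge itself when $\ell = 1$; hence $u_C = \min(\ell, 2)$. The inequality $\ell \geq 0.5\ell + 0.5 u_C$ simplifies to $\ell \geq u_C$, which clearly holds.

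Next I would handle cycles. A cycle with $\ell$ edges has $\ell$ $G$-vertices, each of degree $2$, so $u_C = 0$, $w_C = \ell$ and $\gamma_C = \ell - 1$. The inequality becomes $\ell - 1 \geq 0.5\ell$, i.e., $\ell \geq 2$. To rule out $\ell = 1$ (i.e., a self-loop), observe that a $G$-edge of the form $(\rightarrow(f(u)),\leftarrow(f(u+(1,0))))$ could only be a self-loop if the two grid-edges coincide, which would force $f(u+(1,0)) = f(u)+(1,0)$; but the definition of $G(f_{|W})$ explicitly excludes exactly this case (and similarly for the vertical constraints). So $\ell \geq 2$ for every cycle component, and the inequality holds. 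Summing over all components and using $c(f_{|W}) = \sum_C 1$, $|V| = \sum_C |V_C|$, $w = \sum_C w_C$, $u = \sum_C u_C$ yields the proposition.

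The argument is essentially just bookkeeping; the only subtle point is excluding loops in the cycle case, which is what makes the $0.5$ coefficient on $w-u$ tight. No real obstacle is expected, but it is worth noting that multi-edges (double edges) are allowed—as the paper's Example illustrates—and are naturally accommodated since a double edge forms a cycle of length $2$, which is the worst case for the cycle bound and still satisfies $\ell \geq 2$.
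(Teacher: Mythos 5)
Your proof is correct and follows essentially the same route as the paper: decompose $G(f_{|W})$ into connected components (all paths or cycles by the degree-$\leq 2$ observation), use additivity of $\gamma$, $u$, $w$, and check the inequality case by case. The only difference is that you explicitly justify why cycles of length $1$ (self-loops) cannot occur—pointing to the clause $f(u+(1,0)) \neq f(u)+(1,0)$ in the definition of the constraint graph—whereas the paper simply asserts ``cycles of length at least $2$'' without comment; this small addition tightens the argument rather than changing it.
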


\begin{proof}
As noted earlier the degree of each vertex in $V$ is at most two. Therefore the graph $G(f_{|W})$ is a disjoint union of cycles and paths. 
Moreover, $\gamma, u$ and $w$ are all additive over disjoint components. Therefore it suffices to check the claim for paths and cycles of length at least $2$. 
For a path of length $2$ we have $\gamma = u = w = 1$ as needed and for a path of length $\ell \geq 3$: 
\[
\gamma = \ell - 1, u = 2, w = \ell-1 
\]
so the inequality holds in this case as well. The case of cycles is even simpler since for a cycle of length $\ell \geq 2$ we have: 
\[
\gamma = \ell-1, u = 0, w = \ell
\]
\end{proof}

The proof of Theorem \ref{thm:k_bound} is based on isoperimetric results proved in the next section. For a subset $T_i$ of $[n]^2$ we let $\partial T_i$ denote the {\em edge boundary} of $T_i$ and $|\partial T_i|$ denote the length of the boundary, i.e., the number of edges between $T_i$ and its complement.

\begin{lemma} \label{lem:iso_main}
Let $S_k'(v)$ denote the subset of $S_k(v)$ where there exists an
$ \alpha \in \{\pm (0,1), \pm (1,0) \}$ with $f(\alpha) \neq v + \alpha$.
For $f \in S_k'(v)$ let $T = T(f)$ be the collection of tiles in $[-n,n]^2$ determined by
$f$.
Then for every $\eps > 0$ if $k > c/\eps$ for a large enough $c$ then the following holds.
For every $f \in S_k'(v)$, there exists a $W$ such that
$0 \in W \subset [-k,k]^2$ with the following property.
Let $t + 1 = |\{ i : W \cap T_i \neq \emptyset \}|$. Then
$(1+\eps) \gamma(f_{|W}) \geq 2t + 2 + \eps$.
\end{lemma}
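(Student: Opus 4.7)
By Proposition \ref{prop:constraint}, the constraint graph $G(f_{|W})$ is a disjoint union of paths and cycles (each $V$-vertex has degree at most $2$), and
\[
\gamma(f_{|W}) \ \ge\ \tfrac12\bigl(w(f_{|W})+u(f_{|W})\bigr).
\]
So the target inequality $(1+\eps)\gamma(f_{|W}) \ge 2t+2+\eps$ reduces to exhibiting $W \ni 0$ with $w(f_{|W}) + u(f_{|W}) \ge (4t+4+2\eps)/(1+\eps)$, which is essentially ``four constraints per tile touching $W$'' up to the $\eps$ slack. My plan is to produce such a $W$ by a growth procedure and verify the bound by an isoperimetric count on the preimages $A_i = f^{-1}(T_i)$.

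\textbf{Choice of $W$.} I would work in the nested family of boxes $W_j = [-j,j]^2 \cap [-k,k]^2$, $1\le j\le k$, and let $j$ be the smallest index for which the target inequality holds (with the fallback $j=k$ if nothing smaller does). For $j=1$, the assumption $f \in S_k'(v)$ guarantees a broken adjacency at the neighbor $\alpha$ of $0$, so $G(f_{|W_1})$ is already non-trivial. As $j$ grows by one, the annular ring of new vertices either (i) introduces a new tile into the window, increasing $t$ by one and contributing at least two degree-$1$ constraints (hence both to $w$ and to $u$) from the newly exposed boundary of $A_i\cap W$, or (ii) enlarges $A_i\cap W$ for some already-visible tile, in which case the double-counting identity
\[
\sum_{i:\,A_i\cap W\ne\emptyset} |\partial_W(A_i\cap W)| \ =\ 2\cdot\bigl(\text{\# broken adjacencies in $W$ between different }A_i\bigr)
\]
together with the discrete edge-isoperimetric inequality on $\mathbb Z^2$ forces $w(f_{|W})$ to grow by at least an amount of order $\sqrt{|A_i\cap W|}$. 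Combining these two mechanisms across all tiles intersecting $W$ gives the desired lower bound on $w(f_{|W})+u(f_{|W})$.

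\textbf{Main obstacle.} The principal difficulty is the ``thin tile'' pathology: a tile $T_i$ may enter $W$ only at a corner or through a single vertex, contributing as little as $2$ to the boundary sum rather than the desired $\ge 4$. I would address this by an extremal / minimum-counterexample argument, showing that any minimal $W$ violating the target inequality must have every newly visible tile entering through a corner vertex, and that enlarging $W$ by one additional layer in such a configuration immediately exposes enough new boundary to contradict the failure. The hypothesis $k \ge c/\eps$ is exactly what provides the spatial room needed to carry out this amortization, so that the slack on the left of $(1+\eps)\gamma(f_{|W}) \ge 2t+2+\eps$ absorbs the $\mathrm{O}(1)$ corrections arising from the boundary of $W$ in the isoperimetric estimates.
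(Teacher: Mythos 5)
Your high-level strategy agrees with the paper's in two respects: you reduce the bound on $\gamma(f_{|W})$ to a lower bound on $w(f_{|W}) + u(f_{|W})$ via Proposition~\ref{prop:constraint}, and you use a discrete isoperimetric inequality to count broken-adjacency constraints per tile. You also correctly single out the ``thin tile near a corner'' pathology as the core difficulty. However, the proposal as written has genuine gaps and I do not think the growth procedure you describe goes through as stated.

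First, the restriction to square windows $W_j = [-j,j]^2$ is a real departure from the paper, and it is the departure that creates your ``thin tile'' obstacle in the first place. The paper's proof is a finite case analysis (on the number of tiles $\tau$, on whether the second-largest tile exceeds area $36$, and on whether $f([-2,2]^2)$ is a single tile), and in the residual case it takes a \emph{non-square} window $W = W' \cup \partial_v W'$, where $W'$ is the connected component of a small tile's preimage in $[-k,k]^2 \setminus f^{-1}(T_0)$. This window follows the shape of the small tile exactly, so \emph{every} boundary edge of the small tile becomes a constraint with a uniquely-appearing $G$-vertex; no cutting of the tile by the window boundary can occur. The feasibility of this choice relies on the quantitative observation that when $\tau < 3/\eps$ and all tiles except $T_0$ have area at most $36$, one has $\tau \cdot 36 + 2 < k$, so $W'$ stays clear of $\partial[-k,k]^2$. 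With square windows you have no such guarantee: a window $W_j$ can slice a small tile across a corner, giving it as few as two constraints, which is exactly the configuration you flag and then do not eliminate.

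Second, the remark that the $\eps$-slack on the left-hand side ``absorbs the $O(1)$ corrections'' is not correct for small $t$. When $t = 1$ and $\gamma$ is small, say $\gamma = 3$, the available slack $\eps\gamma = 3\eps$ tends to $0$ and cannot absorb a constant deficit; in fact $(1+\eps)\cdot 3 < 4 + \eps$ for all $\eps < 1/2$. The paper deals with exactly this situation not via slack but via the integrality of $\gamma$: after showing $\gamma(f_{|W}) \geq 2t + 1.5$ it concludes $\gamma(f_{|W}) \geq 2t + 2$ because $\gamma$ is an integer. This rounding step is load-bearing and absent from your argument. Finally, the ``minimal-counterexample / add one more layer'' step is only asserted, not proved, and it is unclear that adding one annulus resolves the thin-tile configuration rather than creating a new one on the enlarged boundary. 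Making this rigorous would likely require reconstructing, inside the amortization, the case distinctions that the paper makes explicitly; in that sense the proposal is an outline of an alternative proof rather than a proof.
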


We now prove Theorem \ref{thm:k_bound} assuming Lemma~\ref{lem:iso_main}.

\begin{proof}[Proof of Theorem \ref{thm:k_bound}].
We want to bound the probability that there exists a feasible $f$ 
where $f : [-k,k]^2 \to [n]^2$ with $f(0) = v$ and 
$f(\alpha) \neq v+\alpha$ for some $\alpha \in \{ \pm (0,1), \pm (1,0) \}$. 
By Lemma~\ref{lem:iso_main} is suffices to bound for each $W \subset [-k,k]^2$ with 
$0 \in W$, the probability that there exists such $f$ where $f_{|W}$ is feasible and moreover 
$(1+\eps) \gamma(f_{|W}) \geq 2t + 2 + \eps$. 

Note that the number of choices of $W$ is $C(k)$. Given $W$ and the fact that
$f(0) = v$, the number of choices of $f_{|W}$ is at most $C(k) n^{2t}$.
This follows since each tile $T$ is determined by one $f(w) \in T$ and a subset of
$S \subset [-k,k]^2$.

By Lemma~\ref{lem:constraint}, the probability that $f_{|W}$ is feasible is bounded above by
$q^{-\gamma(f_{|W})}$ which can be bounded by $n^{-2t-2-\eps}$ by Lemma~\ref{lem:iso_main}.

Since $t \leq (2k+1)^2$ it follows that
the overall probability that such an $f$ exists with $f(0) = v$ is upper bounded by
$C(k) n^{-2-\eps}$ as needed.\end{proof}

\section{Isoperimetric Analysis}
\label{sec:iso}
In this section, we will prove the main isoperimetric lemma, i.e.
Lemma~\ref{lem:iso_main}. We start by proving the following lemma:

\begin{lemma} \label{lem:iso_simple}
Let $f \in [-k,k]^2 \to [n]^2$ with the number of tiles in $f$, $|T(f)| = t+1 \geq 2$.
Then
\[
\gamma(f) \geq t(2 - \frac{2}{s}), 
\]
where $s = 2k + 1$. Moreover, if two tiles have more than $35$ pieces then  
\[
\gamma(f) \geq  2  t (1 - \frac{1}{s} ) +4.
\]
\end{lemma}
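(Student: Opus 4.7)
My strategy is to write $\gamma(f) = |V(G(f))| - c(G(f))$, apply the trivial bound $c \leq |V|/2$, and then lower bound $|V|$ by a tile-localized counting argument. The constraint graph $G(f)$ has maximum degree $2$ (as observed after Lemma~\ref{lem:constraint}) and no self-loops (a loop would require $f(u+e) = f(u)$ for some unit direction $e$, contradicting injectivity of $f$), so every connected component is a path or cycle with at least two $V$-vertices. Hence $c \leq |V|/2$ and $\gamma(f) \geq |V|/2$.

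For the lower bound on $|V|$, I would count, tile by tile, the $V$-vertices of the form $(x, x+e)$ with $x \in T_i$ and $x+e \notin T_i$. Since tiles are connected components of the image $f([-k,k]^2)$, $x+e$ cannot lie in a different tile (otherwise two adjacent image-points would be in different components), so $x+e$ lies outside the image altogether. Such a boundary edge of $T_i$ actually appears in $V$ exactly when the pre-image edge $(u, u+e)$ with $u = f^{-1}(x)$ exists in $[-k,k]^2$, i.e.\ when $u+e \in [-k,k]^2$. These contributions are disjoint across tiles, so writing $b_i$ for the number of boundary edges of $T_i$ whose pre-image edge leaves $[-k,k]^2$,
\[
|V| \geq \sum_{i=1}^{t+1} \bigl(|\partial T_i| - b_i\bigr).
\]

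Now I would apply three standard ingredients. First, the edge-isoperimetric inequality in $\mathbb{Z}^2$ gives $|\partial T_i| \geq 4\sqrt{m_i}$, where $m_i = |T_i|$. Second, $\sum_i b_i$ is bounded by the total number of grid-edges of $[-k,k]^2$ missing from $\mathbb{Z}^2$, which is $4s$. Third, by concavity of $\sqrt{\cdot}$, the minimum of $\sum_i \sqrt{m_i}$ subject to $\sum_i m_i = s^2$, $m_i \geq 1$, and $t+1$ terms is realized by one tile of size $s^2 - t$ and $t$ singletons, giving $\sum_i \sqrt{m_i} \geq \sqrt{s^2-t} + t \geq s + t - t/s$ (using $\sqrt{s^2-t} \geq s - t/s$ from $\sqrt{1-x} \geq 1-x$ on $[0,1]$). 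Combining,
\[
|V| \geq 4(s+t-t/s) - 4s = 4t(1-1/s),
\]
and hence $\gamma(f) \geq |V|/2 \geq t(2-2/s)$.

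For the ``moreover'' clause, having two tiles of size $> 35$ shifts the extremal configuration of $\sum_i \sqrt{m_i}$ to $(s^2-t-35,\,36,\,1,\ldots,1)$, giving $\sum_i \sqrt{m_i} \geq \sqrt{s^2-t-35} + t + 5$. A short concavity estimate, $\sqrt{s^2-t} - \sqrt{s^2-t-35} \leq 35/(2\sqrt{s^2-t-35}) \leq 35/12$ (since the two-large-tile condition forces $s^2-t-35 \geq 36$), shows that this beats the previous lower bound on $\sum_i \sqrt{m_i}$ by at least $2$; this propagates to $|V| \geq 4t(1-1/s) + 8$ and $\gamma(f) \geq 2t(1-1/s)+4$. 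The main obstacle is the clean tile-localization of $V$-vertices without overcounting, which rests on the ``case 2 is impossible'' observation (tiles are connected components, so no $V$-vertex can be a boundary edge of two distinct tiles); once that is secured, the remainder is a routine isoperimetric calculation, and the boundary correction $-4s$ in $|V|$ precisely matches the $-2t/s$ term in the target.
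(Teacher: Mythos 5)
Your proof is correct and essentially follows the same path as the paper: count tile-boundary grid-edges as $V$-vertices of the constraint graph, subtract at most $4s$ for pre-image edges that exit $[-k,k]^2$, apply the axis-aligned isoperimetric inequality $|\partial T_i|\geq 4\sqrt{|T_i|}$, and finish by concavity of $\sqrt{\cdot}$ (with the paper packaging that last step as Lemma~\ref{lem:iso2}, whereas you identify the extremal partition $(s^2-t-35,36,1,\dots,1)$ directly). The only real difference is cosmetic: you lower bound $\gamma = |V|-c$ via $c\leq |V|/2$, while the paper routes through Proposition~\ref{prop:constraint}; both give the same factor $\tfrac12$. One small slip worth fixing: a self-loop in $G(f)$ would require $\rightarrow(f(u))=\leftarrow(f(u+e))$, i.e.\ $f(u+e)=f(u)+e$, not $f(u+e)=f(u)$; this is excluded not by injectivity but by the very definition of the constraint graph, which only places a $G$-edge when $f(u+e)\neq f(u)+e$. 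Your conclusion (no loops, hence every component has at least two $V$-vertices) is still correct.
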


Our proof will be based on the following classical fact.

\begin{lemma} \label{lem:classic}
Let $A \subset R^2$ be a set with boundary that is axis aligned. Then the length of its boundary $\partial A$ satisfies $|\partial A| \geq 4 |A|^{1/2}$, where $|A|$ is the area of the set.
\end{lemma}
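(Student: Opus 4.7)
The plan is to prove Lemma~\ref{lem:classic} by a projection argument: I would bound the area of $A$ from above by the product of the Lebesgue measures of its two axis projections, bound the boundary length from below by twice the sum of those projection measures, and close the loop with AM--GM.

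Concretely, let $P_x \subseteq \mathbb{R}$ and $P_y \subseteq \mathbb{R}$ denote the projections of $A$ onto the horizontal and vertical axes, and set $p_x = |P_x|$, $p_y = |P_y|$. Since $A \subseteq P_x \times P_y$, Fubini immediately gives the area bound $|A| \leq p_x \, p_y$.

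For the boundary, I would split $\partial A$ into its vertical part $\partial_v A$ (segments parallel to the $y$-axis) and its horizontal part $\partial_h A$; this decomposition is well-defined by the axis-aligned hypothesis. For every $y \in P_y$, the horizontal slice $A \cap (\mathbb{R} \times \{y\})$ is a nonempty union of intervals, so it has at least one leftmost and one rightmost endpoint, each lying on $\partial_v A$. Thus the projection of $\partial_v A$ onto the $y$-axis covers $P_y$ with multiplicity at least two, and since projection along vertical segments preserves length, one obtains $|\partial_v A| \geq 2 p_y$. The symmetric argument gives $|\partial_h A| \geq 2 p_x$. Combining these with AM--GM,
\[
|\partial A| = |\partial_v A| + |\partial_h A| \geq 2(p_x + p_y) \geq 4\sqrt{p_x p_y} \geq 4\sqrt{|A|},
\]
which is the required inequality (with equality iff $A$ is a square, i.e.\ $p_x = p_y = |A|^{1/2}$).

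Since the lemma will be invoked on tiles, which are finite unions of unit grid cells, I would in fact state and prove it in the discrete setting: $p_x$ and $p_y$ become the number of nonempty columns and rows of $A$, and the argument above reduces to a trivial counting statement (number of vertical boundary edges $\geq 2 \cdot \#(\textrm{nonempty rows})$). There is no serious obstacle; the only place that would require a sentence of care in the fully continuous version is the slice-count bound $|\partial_v A| \geq 2 p_y$ for axis-aligned sets with intricate boundary, but the combinatorial version needed here bypasses this entirely.
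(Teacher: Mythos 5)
Your projection proof is correct. The paper's one-sentence proof is close in spirit but proceeds through the minimal axis-aligned bounding rectangle $R \supseteq A$, asserting $|\partial R| \leq |\partial A|$ and $|R| \geq |A|$ and then applying the rectangle case; this is essentially your argument with the projection measures $p_x, p_y$ replaced by the side lengths of $R$, followed by the same AM--GM step. Your version is strictly more robust: the step $|\partial R| \leq |\partial A|$ that the paper invokes can fail when $A$ is disconnected (e.g.\ two disjoint unit squares far apart have boundary length $8$ while the bounding-box perimeter is arbitrarily large), whereas your slice-wise bound $|\partial_v A| \geq 2 p_y$ uses only the measures of the projections and holds unconditionally. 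In the paper's actual application the sets are tiles, which are by definition connected components, so the paper's sketch is sound there; but your proof establishes the lemma as literally stated, without an implicit connectedness hypothesis, and your closing remark about the discrete grid reformulation correctly identifies that this combinatorial version (counting nonempty rows/columns and boundary edges) is exactly the form in which the lemma is used.
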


A special case of the lemma above is the elementary exercise showing that the square minimizes the surface area among all rectangles of a given area. The more general case can be proved for example by looking at the minimal axis align rectangle containing the body $A$ and observing that its surface area must be smaller or equal to the surface area of $A$. 
The following lemma will be used in the proof of Lemma~\ref{lem:iso_simple}.

\begin{lemma} \label{lem:iso2}
If $a_0 \geq a_1 \geq \cdots a_t \geq 1$ is an integer partition of $s^2$, $\sum_i a_i = s^2$, let 
\[
g =   2\sum_{i=0}^t  \sqrt{a_i} -  2 s.
\]
Then
\[
g \geq 2t(1 - \frac{1}{s}).
\]
Moreover if $a_0 \geq a_1 \geq 36$ then 
\[
g \geq 2t(1 - \frac{1}{s}) + 4.
\]
\end{lemma}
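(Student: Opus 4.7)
The plan is to treat $(a_0, \ldots, a_t)$ as real variables and minimize $\sum_i \sqrt{a_i}$ over the appropriate polytope, exploiting the concavity of the square-root function: a concave function on a polytope attains its minimum at an extreme point, and the relevant extreme points turn out to be integer-valued, so relaxing integrality loses nothing.

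For the first inequality, consider the polytope $P_1 = \{a \in \mathbb{R}^{t+1} : \sum_i a_i = s^2,\ a_i \geq 1\}$. Its extreme points are obtained by pinning $t$ of the coordinates to their lower bound $1$ and letting the remaining one equal $s^2 - t$, which yields
\[
\sum_i \sqrt{a_i} \ \geq\ \sqrt{s^2 - t} + t.
\]
The claim then reduces to $\sqrt{s^2 - t} \geq s - t/s$. Both sides are non-negative (the right-hand side because $t \leq s^2$), and squaring converts this into $t \geq t^2/s^2$, which holds since $s^2 \geq t + 1$.

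For the second inequality, I would work with the polytope $P_2$ obtained from $P_1$ by adding the constraints $a_0 \geq a_1 \geq 36$. By the same concavity principle, the minimum of $\sum_i \sqrt{a_i}$ over $P_2$ is attained at an extreme point. Enumerating the extreme points of $P_2$ under the ordering and lower-bound constraints, the smallest value is realized at $a_0 = s^2 - t - 35$, $a_1 = 36$, $a_i = 1$ for $i \geq 2$, contributing $\sqrt{s^2 - t - 35} + t + 5$. Any competing vertex -- for instance placing the big coordinate at some $a_j$ with $j \geq 2$, which forces $a_0 = a_1 = 36$ and contributes $\sqrt{s^2 - t - 70} + t + 10$, or clustering several coordinates at a common value -- gives a larger sum, as one checks by short pairwise comparisons using that $\sqrt{s^2 - t - 35} - \sqrt{s^2 - t - 70}$ is of order $1/s$. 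This reduces the claim to the numerical inequality $\sqrt{s^2 - t - 35} \geq s - 3 - t/s$.

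The remainder is algebra. If the right-hand side is non-positive there is nothing to check; otherwise I square and rearrange to $6s + t - 44 \geq 6t/s + t^2/s^2$. Setting $u = t/s$, which lies in $[0, s - 71/s]$ because $s^2 \geq a_0 + a_1 + (t-1) \geq 71 + t$, the inequality becomes $(s - u)(u + 6) \geq 44$. As a concave quadratic in $u$, its minimum over the interval sits at an endpoint: it equals $6s$ at $u = 0$ and $71 + 426/s - 5041/s^2$ at $u = s - 71/s$, and both exceed $44$ for $s \geq 9$, a constraint forced by $a_0 + a_1 \geq 72$. The main obstacle I anticipate is the bookkeeping in the second part: carefully enumerating the extreme points of $P_2$ under the ordering constraints and confirming that the proposed vertex really is the minimizer, together with making the final numerical check airtight at the boundary values of $s$.
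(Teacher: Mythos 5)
Your treatment of the first inequality matches the paper's: both exploit concavity to push the minimum to the extreme point with one large part and the rest equal to $1$, then linearize $\sqrt{1-t/s^2}$ (you do it by squaring, the paper by $\sqrt{1-x}\geq 1-x$); these are the same argument. For the second inequality you take a genuinely different route. The paper never touches the extreme points of the constrained polytope: it performs a single \emph{repartition} step, replacing $(a_0,a_1)$ by $(a_0+a_1-1, 1)$, proves $\sqrt{a_0}+\sqrt{a_1}\geq\sqrt{a_0+a_1-1}+1+2$ via the algebraic identity $(\sqrt{a_0}+\sqrt{a_1}-3)^2\geq a_0+a_1-1$ (valid once $\sqrt{a_0},\sqrt{a_1}\geq 6$), and then applies the already-proved first bound to the new partition. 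That reduction is short and avoids any case analysis. Your approach re-does the vertex enumeration on the restricted polytope and closes with the numerical check $(s-u)(u+6)\geq 44$. It does work — the key comparison among vertices, $\sqrt{s^2-t-35}-\sqrt{s^2-t-70}\leq 5$, holds exactly because $s^2\geq a_0+a_1+(t-1)\geq t+71$ forces $s^2-t-70\geq 1$ (note the difference is $\emph{not}$ $O(1/s)$ near that boundary, it is close to $5$, so your heuristic remark there is misleading even though the conclusion is correct) — but it requires you to carefully enumerate and dismiss the remaining vertex types (your ``clustering'' case is the vertex $a_0=a_1$ with the rest equal to $1$; one needs to verify it also loses, which it does by the tangency $12\sqrt{y-35}\leq y+1$ at $y=s^2-t\geq 71$). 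So: correct in outline, but more bookkeeping than the paper, which buys its brevity by reducing the ``moreover'' clause to the first clause via the $b$-partition trick rather than re-solving the optimization from scratch.
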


\begin{proof}
Since $x \to x^{1/2}$ is concave, the minimum of $g$ under the constraints that $\sum a_i = s^2$ and each $a_i \geq 1$ is obtained when all of the $a_i$ but one,
satisfy $a_i = 1$. Thus
\[
g \geq 2 (t + s \sqrt{1-t/s^2} - s)  \geq 2(t + s(1-t/s^2) - s) =
2 t (1 - \frac{1}{s}). 
\]
The first statement proof follows. When $a_0 \geq a_1 \geq 36$, utilizing the concavity of $x^{1/2}$ allows to obtain a better bound. consider the integer partition $b$ obtained by joining all the mass of $a_1$ to $a_0$ except one unit that is left separately:
\[
b_0 = a_0 + a_1 - 1,\quad b_1 = a_2,\ldots,b_{t-1} = a_{t}, \quad b_ t = 1.
\]
 Since $\sqrt{a_0} \geq \sqrt{a_1} \geq 6$  we get 
 \[
10 + 2 \sqrt{a_0} \sqrt{a_1} \geq 2 \sqrt{a_0} \sqrt{a_1}  \geq 6 (\sqrt{a_0} + \sqrt{a_1})
\]
This implies 
\[
(\sqrt{a_0} + \sqrt{a_1} - 3)^2 =  a_0 + a_1 + 9 - 6 (\sqrt{a_0} + \sqrt{a_1}) + 2 \sqrt{a_0} \sqrt{a_1} \geq a_0 + a_1 -1, 
\]
so taking square roots we see that 
\[
 \sqrt{a_0} + \sqrt{a_1} \geq \sqrt{a_0 + a_1 - 1} + 3 =  \sqrt{b_0} + \sqrt{b_t} + 2. 
\]
Hence, the first statement of the lemma gives
$$
2 \sum_{i=0}^t \sqrt a_i - 2s \geq 2 \sum_{i=0}^t \sqrt b_i - 2 s+ 4 \geq 2t (1 - \frac{1}{s}) +  4,
$$
as needed. 
\end{proof}

We can now prove Lemma~\ref{lem:iso_simple}
\begin{proof}[Proof of Lemma ~\ref{lem:iso_simple}]
Note that except for the edges at the boundary of the grid $[-k,k]^2$, every edge at the boundary of one of the tiles $T_0,\ldots,T_t$ is part of a constraint and appears uniquely.
Thus by Proposition~\ref{prop:constraint} it follows that
\[
\gamma(f) \geq  \frac{1}{2} (\sum_{i=0}^t |\partial T_i| - 4 s)  \geq 2 \sum_{i=0}^{t} \sqrt{|\partial T_i| } - 2 s. 
\]
where the second inequality follows from  Lemma~\ref{lem:classic}. The lemma is then a consequence  of Lemma~\ref{lem:iso2} 
\end{proof}

We now prove Lemma~\ref{lem:iso_main}.

\begin{proof}[Proof of Lemma \ref{lem:iso_main}]
We will take $c = 200$ so $k \geq 200/\eps$.
We will consider a few cases. Let $\tau+1$ be the number of tiles of $f$.
\begin{itemize}
\item
 $\tau \geq 3/\eps$. In this case, we set $W = [-k,k]^2$.  Then $t = \tau$ and Lemma~\ref{lem:iso_simple} implies that
\[
\gamma(f)(1+\eps) \geq 2 t (1-1/k)(1+\eps) \geq 2t(1+\eps/2) \geq 2t + 3.  
\]
Hence, the set $W$ satisfies the conclusion of Lemma \ref{lem:iso_main}.
\item
We next consider the case where the second largest tile is
of area at least $36$. We may also take $W = [-k,k]^2$. Then $t = \tau$ and by Lemma~\ref{lem:iso2},
\[
\gamma(f)(1+\eps) \geq 2 t (1+\eps) (1 - \frac{1}{2k+1}) +  4 \geq 2t  + 4,
\]
as needed.

\item
%

We next consider the case where $f([-2,2]^2)$ is all part of the same tile of $f$. In this case, we set $W = [-2,2]^2$. Since $T(f_{|W}) = 1$, it is sufficient to check that $\gamma ( f_{|W|} ) \geq 2$. To this end, consider the graph $H$ with vertex set $W$ obtained by joining, for $\beta \in \{ \pm (1,0) , \pm (0,1) \}$, $x$ and $x+\beta$ if $f(x + \beta) = f(x) + \beta$. In words, the edges of $H$ correspond to pairs of vertices that are adjacent both in $W$ and in the original puzzle. Therefore, if $x$ and $y$ are in the same connected component of $H$ then 
$f(y) = f(x + (y-x)) = f(x) + (y-x)$ (this can be proven by induction on the length of the minimal path connecting $x$ and $y$ in $H$). Thus since $y-x \in \{ \pm (1,0) , \pm (0,1) \}$ it follows that $\{x, y\}$ is an edge of $H$. Hence, our assumption $f(0) = v$ and $f(\alpha ) \ne v + \alpha$ for some $\alpha \in  \{ \pm (1,0) , \pm (0,1) \}$, implies that $0$ and $\alpha$ are not in the same connected component of $H$.  On the other hand, we observe that except for the edges in  $\partial W$, every edge  at the boundary of a connected component of $H$ is part of a constraint in $G(f_{|W})$.  By inspecting the possible configurations of the connected component of $\alpha$ in $G$, we see that has a at least $4$ edges on its boundary which are not in $\partial W$. It follows there are at least $4$ constraints. By  Lemma~\ref{prop:constraint}, it implies that $\gamma(f_{|W} ) \geq 2$ as needed.

\item
The last case is where $\tau < 3/\eps$, all the parts but one are of area at most $36$ and there exist $y, x \in [-2,2]^2 $ which belong to different tiles. Let $T_0$ be the tile of $f$ with the maximal size. Note that
\[
|T_0| \geq (2k+1)^2 - 3/\eps*36 > (2k+1)*(2k).
\]
Since $f(x)$ and $f(y)$ lie in different tiles, at least one of the two doesn't belong to 
$T_0$. WLOG assume that $x \in f^{-1}(T_1)$ where
$|T_1| \leq 36$. Let $W'$ denote the connected component of $x$ in the subset
$[-k,k]^2 \setminus f^{-1}(T_0)$.
A key observation is that since $\tau \times 36 +2 < k = 200/\eps$, it follows that none of the elements of $W'$ are adjacent to the boundary of the grid $[-k,k]^2$.
In other words each edge in $\partial W'$ has one of its end point in $f^{-1}(T_0)$. This implies that $\partial_v W' \subset f^{-1}(T_0)$, where  $\partial_v W'$ is the {\em vertex boundary} of $W'$. We set $W = W' \cup \partial_v W'$.

Define $U_0 = f^{-1}(T_0)$ and let $U_i = f^{-1}(T_i) \cap W$. We assume without loss of generality
that $U_i \neq \emptyset$ for $i=0,\ldots,t$ and $U_i$ is empty otherwise.
In other words, the number of tiles of $f_{|W}$ is $t+1$. 
We wish to lower bound $\gamma(f_{|W})$.
Note that every edge between different $U_i$'s defines a constraint.
Thus
\[
w(f_{|W}) \geq \frac{1}{2}\sum_{i=1}^t |\partial U_i| + \frac{1}{2} |\partial W'|.
\]
Moreover, every edge in $\partial f(U_i)$ defines a constraint with a vertex that appears only once.
Thus
\[
u(f_{|W}) \geq \frac{1}{2} \sum_{i=1}^t |\partial f(U_i)|.
\]
Thus by Proposition~\ref{prop:constraint} and the fact that the boundary of each set is at least
$4$ it follows that
\[
\gamma(f_{|W}) \geq \frac{1}{2} (w(f) + u(f)) \geq \frac{1}{4}
\left(\sum_{i=1}^t |\partial U_i| + \sum_{i=1}^t |\partial f(U_i)| + |\partial W'| \right) \geq
2 t + \frac{1}{4} |\partial W'|.
\]
If $|W'| \geq 2$ then $|\partial W'| \geq 6$ and so $\gamma(f_{|W}) \geq 2 t + 1.5$. 
However since $\gamma(f)$ is integer we get $\gamma(f_{|W}) \geq 2t + 2$ and therefore
\[
\gamma(f_{|W})(1+\eps) \geq 2 t + 2 + \eps,
\]
as needed.
So it remains to prove the claim when $|W'|=1$. In this case, $\gamma(f_{|W}) = 4$ and $(1+\eps) \gamma(f_{|W}) \geq 4 + \eps$ as needed.
\end{itemize}
The proof is complete.
\end{proof}

\section{Algorithmic aspects} \label{sec:algo}

We now prove our main result Theorem \ref{thm:main}.  We will describe a deterministic algorithm which reconstructs the planted assembly with high probability if $q \geq n^{1+ \eps}$. Theorem \ref{thm:main} will be a direct consequence of the forthcoming Theorem \ref{thm:mainCR} and Theorem \ref{thm:mainCC} which give respectively the correctness of the algorithm and its running time. Throughout this section, we take $k = \lceil c / \eps \rceil$, where $c = 200$ is as in  Theorem \ref{thm:k_bound} and $n$ large enough so that $2 (n-2k)^2 \geq n^2$.

Consider the original planted assembly of the puzzle. In this assembly, we refer to pieces located in $[k+1, n-k] \times [k+1, n - k]$ as {\em core} pieces, and to other pieces as {\em peripheral} pieces. We further partition the periphery into $k$ concentric {\em shells}, where shell~$k$ contains those pieces on the boundary of the puzzle, and shell $i$ for $1 \le i \le k-1$ containing those pieces at distance $k - i$ from shell $k$. Shell $0$ is defined similarly, it is the inner boundary of the core. An edge is a peripheral edge if it is adjacent to a peripheral piece. 
A jig of  a piece refers to an edge adjacent to a piece. 

Recall that, for any piece $v$,  if $f$ is in $S_k (v)$ the four pieces $f (\alpha)$, $\alpha \in \{ (0 ,\pm 1), (\pm 1,0) \}$, is called a candidate neighborhood of $v$. Let $c' >0$ be a fixed constant. We say that a puzzle is {\em  typical} if the following properties hold,

\begin{enumerate}[(i)]

\item \label{typ1}Every core piece $v$ has a unique candidate neighborhood.

\item \label{typ2} Every peripheral piece $v$ either has no candidate neighborhood or a unique candidate neighborhood. In this last case, this candidate neighborhood is the neighborhood of the piece in the planted assembly. 
    

\item \label{typ4} The number of peripheral edges with a non-unique color among the peripheral edges is at most $n - 2 k -1$.
    
\item \label{typ5} For every peripheral piece $v$ and two jigs of $v$ (say $j_1$ and $j_2$), no other peripheral piece $u$ has two jigs (say $j_3$ and $j_4$) with matching colors. Namely, $\sigma(j_1) = \sigma(j_3)$ and $\sigma(j_2) = \sigma(j_4)$ cannot hold simultaneously.

\item \label{typ6} For every two colors $a,b$ there are at most $c' k $ pieces with two jigs with these colors.

\end{enumerate}

\begin{lemma}
If $k$ is as above and $c' = 4 / c = 1/50$ in property \eqref{typ6}, with high probability, a random puzzle is typical.
\end{lemma}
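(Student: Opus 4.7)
The plan is to show each of the five listed properties fails with probability $o(1)$ and finish with a union bound. Properties~(i) and~(ii) rest on Theorem~\ref{thm:k_bound}; properties~(iv), (v), (vi) are first- or higher-moment colour-collision estimates.

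\textbf{Properties~(i) and~(ii).} Apply Theorem~\ref{thm:k_bound} together with a union bound over all $v \in [n]^2$ and all four directions $\alpha \in \{(0,\pm 1),(\pm 1,0)\}$: the resulting failure probability is at most $4 n^2 \cdot C(k) n^{-2-\eps/2} = O(n^{-\eps/2})$. On the complementary event, every $f \in S_k(v)$ satisfies $f(\alpha) = v+\alpha$ for every $\alpha$, so any candidate neighbourhood coincides with the planted one. For a core piece the planted map $x \mapsto v+x$ itself lies in $S_k(v)$, giving both existence and uniqueness, which is~(i). For a peripheral piece at positive distance from the boundary the same statement yields~(ii); for a piece on the boundary some $v+\alpha$ falls outside $[n]^2$, so the forced equality $f(\alpha)=v+\alpha$ makes $S_k(v)$ empty, which is the first clause of~(ii).

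\textbf{Properties~(iv) and~(v).} These are one-line first-moment estimates. For~(iv), there are $O(nk)$ peripheral edges and each pair of distinct edges agrees in colour with probability $1/q$, so the expected number of peripheral edges whose colour coincides with another peripheral edge's is at most $2\binom{O(nk)}{2}/q = O(k^2 n^{1-\eps}) = o(n)$; Markov's inequality then gives the required bound $n-2k-1$ whp. For~(v), summing $1/q^2$ over the $O(n^2 k^2)$ ordered pairs of distinct peripheral pieces and the constant number of jig pairings within each piece yields expected bad count $O(k^2 n^{-2\eps}) = o(1)$.

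\textbf{Property~(vi).} Fix two colours $a,b \in [q]$ and let $X_{a,b}$ count pieces with two distinct jigs carrying colours $a$ and $b$. For a single piece this event has probability at most $12/q^2$, so
\begin{equation*}
\IP[X_{a,b} \ge c'k] \le \binom{n^2}{c'k}(12/q^2)^{c'k} \le (12 n^2/q^2)^{c'k} \le 12^{c'k}\, n^{-2\eps c'k}.
\end{equation*}
With $c' = 1/50 = 4/c$ and $k \ge 200/\eps$, one has $c'k \ge 4/\eps$, hence $n^{-2\eps c'k} \le n^{-8}$, while $12^{c'k}$ is a constant depending only on $\eps$. A union bound over the at most $q^2 = O(n^{2+2\eps})$ colour pairs produces total failure probability $O(n^{-6+O(\eps)}) \to 0$. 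Assembling the five bounds completes the proof.

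The main obstacle, once Theorem~\ref{thm:k_bound} has been granted, is exactly the balance in~(vi): the $m$-th moment with $m = c'k$ must defeat the $q^2$ union bound over colour pairs, and the numerical choice $c'=4/c$ in the statement is calibrated precisely so that $2\eps c'k \ge 8$, leaving ample room to absorb the $O(n^{2+2\eps})$ factor.
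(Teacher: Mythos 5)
Your treatment of properties~(i), (ii), (iv), and (v) essentially matches the paper's. (Small omission in~(v): you account only for non-adjacent pairs of peripheral pieces, with probability $O(1/q^2)$ each, but two adjacent pieces can share a grid-edge and then have two colours in common with probability $O(1/q)$; since there are only $O(nk)$ such pairs and $O(nk/q)=o(1)$, your conclusion still stands.)

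The genuine gap is in property~(vi). The inequality
$\IP[X_{a,b}\ge c'k]\le\binom{n^2}{c'k}(12/q^2)^{c'k}$
silently assumes that the events ``piece $p$ has two jigs coloured $a,b$'' are independent across pieces, and this is false: adjacent pieces share a grid-edge. Concretely, for two adjacent pieces $p,p'$ with common edge $e$, conditioning on $\sigma(e)=a$ leaves each of them needing only a single $b$-coloured jig among the remaining three edges, so $\IP[\text{both bad}]\ge \IP[\sigma(e)=a]\cdot(3/q)^2=\Omega(q^{-3})$, which dominates your $(12/q^2)^2=O(q^{-4})$ for large $q$. More generally, $m$ pieces forming a grid-path can all be bad with probability $\Theta(q^{-(m+1)})$ rather than the $\Theta(q^{-2m})$ your bound asserts, and $q^{-(m+1)}$ is far too large to survive the union bound over $q^2$ colour pairs. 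The paper repairs this by restricting to pieces $(i,j)$ with $i+j$ of a fixed parity: such pieces are pairwise non-adjacent, so their incident edge sets are disjoint and the per-piece events really are independent, making the binomial bound legitimate for each parity class. This parity split is exactly what costs the extra factor of $2$ and forces the calibration $c'k \ge 4/\eps$ rather than $2/\eps$. To fix your argument, bound $\IP[X^{\mathrm{even}}_{a,b}\ge c'k/2]$ and $\IP[X^{\mathrm{odd}}_{a,b}\ge c'k/2]$ separately via the binomial estimate (now valid), and observe that $X_{a,b}\ge c'k$ forces one of the two.
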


\begin{proof}
The first two properties  are a consequence of Theorem \ref{thm:k_bound}. Indeed, from the union bound, Theorem \ref{thm:k_bound} implies that with high probability, for any piece $v \in [n]^ 2$ if $f \in S_k (v)$ then $f(\alpha) = v + \alpha$ for all $\alpha \in \{ (0 ,\pm 1), (\pm 1,0) \}$.  Let us call $E$, the latter event. By definition, if $E$ holds, any piece has at most one candidate neighborhood and this candidate neighborhood is the neighborhood of the piece in the planted assembly. However, if $v$ is a core piece, $S_k(v)$ is non-empty, hence, if $E$ holds, $v$ has necessary a unique candidate neighborhood. This implies properties \eqref{typ1}-\eqref{typ2}.

We check property \eqref{typ4}. Let $J = \Theta ( n k)$ be the number of  peripheral edges and let $m$ be the number of peripheral edges which have a non-unique color among the peripheral edges.
The probability that two different edges have the same color is $1/ q$. Hence, the expectation of $m$ is at most $J (J-1) /  q = O ( (nk)^2 / q)$. Since $q \gg n$, from Markov inequality, it implies that with high probability, $ m = o(n)$. 

We check property \eqref{typ5}. Let us say that pieces $v$ and $u$ have two colors in common,  if we can find two jigs of $v$ (say $j_1$ and $j_2$), and two jigs  of $u$ (say $j_3$ and $j_4$) such that  $\sigma(j_1) = \sigma(j_3)$ and $\sigma(j_2) = \sigma(j_4)$. The probability that two distinct pieces have two colors in common is at most $6^2 / q^{2}$ if these pieces are not adjacent in the planted puzzle and at most $6^2 / q$ if they are adjacent. Hence, the expected number of pairs of peripheral pieces which have two colors in common is at most $ O ( J^2  / q^2 + J / q)$. Since $q \gg n$, the latter is $o(1)$, implying property \eqref{typ5}.

We finally check property \eqref{typ6}. It suffices to prove the claim for pieces whose location $(i,j) \in [n]^2$ satisfies that $i+j$ is odd (even) with $c' /2$ instead of $c'$. We restrict ourselves to those pieces.
Note that no two such pieces share any edge. The probability that a specific piece will have two jigs with colors $a,b$ is at most $6 q^{-2}$. Therefore, by independence, the probability that there are at least $ r\geq 1$ pieces with jigs with colors $a,b$ is at most
$
n^{2r} (6q^{-2})^r . 
$
We take the union bound over all $q^2$ pairs of colors. We find that the probability that there is a pair $a,b$ such that there are at least $r$ pieces with jigs with colors $a,b$ is at most 
$$
6^r n^{2r} q^{-2(r-1)} \leq   6^r n^{2r} n^{-2(r-1) ( 1+ \eps)} = 6^r n^{2  - 2 \eps ( r-1)}. 
$$
For any integer $r > 1  + 1/ \eps$, the latter goes to $0$ with $n$.  Since $\eps \geq k/c$, we can choose $r =2 + k / c$. It follows that there at most $(r-1) \leq 2 k /c$ pieces with two jigs of a given colors. Since $c = 200$, it implies property \eqref{typ6}. \end{proof}

\medskip

We now describe a deterministic algorithm that will reconstruct the planted assembly whenever the underlying puzzle is typical. We describe successively each step of the algorithm on a general puzzle and explain how it proceeds on a typical puzzle. We will later explain how to implement it.

\begin{enumerate}[1.]
\item For each puzzle piece $v$, determine whether it has a candidate neighborhood. If there is no candidate neighborhood mark the piece $v$ as peripheral. If there is a unique candidate neighborhood note which pieces are the neighbors of $v$. Finally, if there is a piece with  a non-unique candidate neighborhood, the algorithm stops here and fails to reconstruct the planted assembly. 
  \end{enumerate}

The properties \eqref{typ1}-\eqref{typ2} imply that the algorithm will not stop for a typical puzzle. Observe also that property \eqref{typ1} implies that all pieces marked as peripheral are indeed peripheral pieces. Note however, that for the other pieces, we do not yet know whether they are peripheral or belong to the core.

\begin{enumerate}[2.]
  \item Greedily join pairs of pieces that are neighbors of each other, as long as possible.  If the largest connected component does not contain a $n - 2 k$ by $n - 2 k$ square, the algorithm stops and fails.  
 \end{enumerate}

For a typical puzzle,  property \eqref{typ1} implies that all core pieces will belong to the same connected component. The condition $2 (n-2k)^2 \geq n^2$ implies that the largest connected component does necessarily contain the core.  Hence the algorithm will not stop here. Importantly, properties \eqref{typ1}-\eqref{typ2} imply that the pieces are necessarily assembled as in the planted assembly.

\begin{enumerate}[3.]
    
\item  From the largest connected component, determine the boundaries of the core (if only one $n-2k$ by $n-2k$ square fits), or guess the boundaries of the core if there is more than one option. (There are at most $2k$ options for where to place the left boundary and at most $2k$ options for where to place the bottom boundary, so altogether there at most $O(k^2)$ possibilities and all of them can be tried.) For simplicity of the presentation, once the core has been determined, disassemble all peripheral pieces and keep only the core.
 \end{enumerate}

For a typical puzzle,  we will have to check that if the guess of the core was not correct then the remainder steps of the algorithm will detect it. On the contrary, if the guess was correct, then the algorithm should return the planted assembly. 

\begin{enumerate}[4.]

\item Greedily assemble the shells of the periphery one by one, from the core towards the inner boundary as follows. Shell $0$ is already assembled. For $0 \leq i \leq k-1$, suppose that shell $i$ was already assembled. To assemble shell $i+1$ find in each one of the four sides of shell $i$ one piece whose free edge (leading out of the assembled part) has a color that appears only once among the yet unassembled peripheral pieces. If no such edge exist for a side, the algorithm is stuck and moves to the next step. Otherwise, find the unique yet unassembled peripheral piece that has an edge of the desired color and insert it in its location. Thereafter, the rest of shell $i+1$ is greedily assembled as follows. Consider an undetermined location next to an already assembled piece of shell $i+1$ which is not one of the four corners of shell $i+1$. This undetermined location is neigbhor of two already assembled pieces, thus it specifies two free edges.  If, among the yet unassembled pieces, there is a unique piece which has matching colors with these two free edges, we insert it here. If not, the algorithm is stuck and moves to the next step. When, all but the four corners of shell $i+1$ are assembled, the above procedure is applied to the four corners. 
 \end{enumerate}
 
Assume that the puzzle is typical and that the guess of the core was correct. We should check that the algorithm finds the planted assembly. We prove by recursion on $i$, $0 \leq i \leq k-1$, that the algorithm reconstructs correctly shell $i+1$. To this end, notice that property \eqref{typ4} implies that for each side of shell $i$, $0 \leq i \leq k-1$, there will be at least one free edge among the $n - 2k - 2i$ free edges with a color which appears once among the yet unassembled pieces. Then, thanks to property \eqref{typ5}, we will reconstruct unambiguously shell $i+1$.

Assume that the puzzle is typical and the guess of the core was not correct.  We should check that the algorithm is stuck at some point. As pointed earlier,  the guessed core is an $n-2k$ by $n-2k$ square in the planted assembly. If the algorithm has not been stuck earlier, it will reconstruct successive shells until one side of length $n - 2 k +2 i$ of the assembled pieces is on the boundary of the planted assembly for some $0 \leq i \leq k-1$. Then,  by property \eqref{typ4} at least one of the free edges  on this side has a color which is not present among the yet unassembled pieces. Hence, it will not be possible to assemble it and the algorithm will be stuck.

\begin{enumerate}[5.]
   
\item If a properly colored assembly has been found, the algorithm returns this assembly. Otherwise, try a new guess for the core and repeat stage 4. If all guesses for the core have been tried, the algorithm stops and fails. 
    
\end{enumerate}

The above analysis of the algorithm has proved its correctness on typical puzzles.  (Note that we have not used so far the property \eqref{typ6}.) 

\begin{theorem}\label{thm:mainCR}
If the puzzle is typical then the above algorithm recovers the planted puzzle. 
\end{theorem}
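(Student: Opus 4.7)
The plan is to verify each of the five stages of the algorithm in order, assuming the puzzle is typical; most of the reasoning is already given inline in the algorithm description, so the proof amounts to organizing those observations into an explicit stage-by-stage argument with an induction on the shell index in Stage~4. Properties (i) and (ii) immediately imply that Stage~1 does not abort and that every recorded candidate neighborhood agrees with the planted one. Stage~2 therefore joins the core together with whatever peripheral pieces have candidate neighborhoods into one connected component laid out exactly as planted, and the hypothesis $2(n-2k)^2 \geq n^2$ forces this component to contain at least one $(n-2k)\times(n-2k)$ square, so Stage~2 also does not abort. Stage~3 enumerates the $O(k^2)$ candidate core offsets inside the large component; exactly one of them is the planted core, and the remainder of the proof splits into verifying success for the correct guess and failure for every other.

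For the correct guess I would prove by induction on $i \in \{0, 1, \ldots, k-1\}$ that after the $i$-th iteration of Stage~4 the shells $0, 1, \ldots, i+1$ coincide with the planted assembly, the base case $i = 0$ being the output of Stage~3. In the inductive step, each side of shell $i$ carries $n - 2k + 2i \geq n - 2k$ free edges, so by property (iii) (which bounds the number of peripheral edges of non-unique color by $n - 2k - 1$) at least one of these edges has a color appearing exactly once among the yet unassembled peripheral pieces, necessarily on the planted neighbor, which is inserted correctly as a seed. Sweeping along each side of shell $i+1$, every non-corner slot is adjacent to two already placed pieces and therefore specifies two required colors on two specific jigs, and property (iv) rules out any unassembled peripheral piece other than the planted neighbor from carrying those two colors on those jigs. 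The four corner slots are handled at the end by the same two-color uniqueness argument.

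The subtle case, and the one I expect to need the most care, is showing that Stage~4 becomes stuck for every incorrect core guess so that Stage~5 eventually moves on to the correct one. For an incorrect guess the proposed square is still a translate of the planted core inside $[1,n]^2$ but with at least one side strictly closer to the boundary of the grid than the corresponding side of the planted core. Stage~4 therefore assembles shifted copies of planted peripheral pieces for as long as the shifted rings remain inside $[1,n]^2$, the shell reconstruction being forced by the same properties (iii) and (iv) as above; but after at most $k-1$ iterations one side of the assembled region must coincide with the outer boundary of the planted puzzle. The free edges on this side are then true boundary edges of the planted assembly, and by property (iii) at least one of them has a color unique among peripheral edges whose only occurrence is on an already placed piece, hence is absent from the unassembled pool. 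This obstruction prevents either the seed step or the subsequent sweep from completing on that side, Stage~4 is stuck, and Stage~5 advances to the next guess; since only $O(k^2)$ guesses exist, the correct one is tried in some iteration and returns the planted assembly.
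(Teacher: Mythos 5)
Your proof follows the paper's own argument essentially step for step: Stage 1 and Stage 2 are handled by properties (i)–(ii) plus the size bound $2(n-2k)^2 \geq n^2$; Stage 4 with the correct core guess is proved by induction on the shell index using (iii) for the seed edge and (iv) for the unambiguous sweep; and the incorrect-guess case is resolved by observing that the assembly must eventually reach the grid boundary, where (iii) supplies a free edge whose color does not recur in the unassembled pool. (You also correctly have $n-2k+2i$ free edges on each side of shell $i$, fixing what appears to be a sign typo $n-2k-2i$ in the paper.) The proposal is correct and takes the same approach as the paper.
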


We now analyze the complexity of the algorithm, this is where property \eqref{typ6} will be used.

\begin{theorem}\label{thm:mainCC}
If the puzzle is typical then the above algorithm can be implemented to run in time $O(k) ^{k^2} n^{O(k)}$. 
\end{theorem}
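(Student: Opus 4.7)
My plan is to analyze the five steps of the algorithm separately. Step~1, which enumerates candidate neighborhoods for every piece, is the main obstacle and will contribute time $n^{O(k)}\cdot O(k)^{O(k^2)}$; steps~2--5 are straightforward and run in time polynomial in $n$ and $k$. The key idea for step~1 is to order the enumeration so that almost every piece placed has two already-placed axial neighbors, letting property~\eqref{typ6} cap the branching factor at $O(k)$.

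To implement step~1 for a fixed $v$, I would perform a depth-first search that fills the $(2k+1)\times(2k+1)$ window around $v$ one position at a time, visiting positions in the order of concentric annuli $A_i := [-i, i]^2 \setminus [-(i-1), i-1]^2$ for $i = 1, \dots, k$. Inside $A_i$ the four axial ``starter'' positions $(\pm i,0)$ and $(0,\pm i)$ are placed first, and then each of the four sides of $A_i$ is walked outward from its starter, so that every non-starter position has two already-placed axial neighbors when it is visited. A non-starter position imposes two color constraints on the piece to be placed there, so by property~\eqref{typ6} at most $c' k = O(k)$ input pieces are candidates. A starter position imposes only one color constraint, with at most $n^2$ candidates by the trivial bound (the injectivity of $f$ only restricts the candidate set further). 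With $4k$ starters and $(2k+1)^2 - 1 - 4k = 4k^2$ non-starters, the total number of partial extensions the search explores is at most $(n^2)^{4k}\cdot O(k)^{4k^2} = n^{O(k)}\cdot O(k)^{O(k^2)}$ per $v$, and the same bound overall after summing over the $n^2$ choices of $v$.

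For steps~2--5 I would precompute, in $O(n^2)$ time, hash tables indexing input pieces by each individual jig color and by each ordered pair of jig colors on a prescribed pair of jigs. Step~2 then runs in $O(n^2)$: by properties~\eqref{typ1}--\eqref{typ2} every piece has at most one candidate neighborhood, so the greedy joining is straightforward bookkeeping. Step~3 enumerates $O(k^2)$ placements of the core boundary. For each core guess, step~4 reconstructs each of the $k$ peripheral shells in $O(n)$ time: a uniquely-colored free edge on each side is located via the single-color table (whose existence, when the guess is correct, is guaranteed by property~\eqref{typ4}), and the remainder of the shell is filled using pair-of-color lookups, with property~\eqref{typ5} ensuring uniqueness of each inserted piece. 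Summing over the $O(k^2)$ guesses in step~5, steps~2--5 together take time $O(n k^3)$, which is dominated by step~1. Combining the two bounds, the overall running time is $O(k)^{k^2}\, n^{O(k)}$, as claimed.
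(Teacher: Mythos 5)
Your proposal is correct and rests on the same central mechanism as the paper's proof: brute-force a linear-size ``seed'' of the $(2k+1)\times(2k+1)$ window (giving the $n^{O(k)}$ factor), then fill in the remaining $\Theta(k^2)$ positions one at a time in an order that guarantees each new position already has two placed axial neighbors, so that property~\eqref{typ6} caps the branching factor at $c'k = O(k)$ per position (giving the $O(k)^{k^2}$ factor). The paper seeds with the top row and left column of $[-k,k]^2$ and sweeps row by row, while you seed with the center and the $4k$ axial ``spokes'' and sweep by concentric annuli; these are interchangeable. Your estimates for steps~2--5 are sound (in fact slightly tighter than the paper's, which uses $O(kn^2)$ per shell in step~4, but that slack is immaterial since step~1 dominates).

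One small gap worth noting in your step~1 ordering: a corner of annulus $A_i$, say $(i,i)$, has its two already-available axial neighbors $(i-1,i)$ and $(i,i-1)$ both inside $A_i$ itself (not in $A_{i-1}$). So if you walk each side outward ``through'' its corner, at the moment the right-side walk reaches $(i,i)$, only $(i,i-1)$ is placed; $(i-1,i)$ belongs to the top-side walk. To actually have two constraints at every non-starter position you should walk each side only up to (not including) the corners, and place the four corners of $A_i$ last, after all four sides. With that reordering the claim ``every non-starter position has two already-placed axial neighbors'' holds and your bound $(n^2)^{4k}\cdot O(k)^{4k^2}$ per center $v$, hence $n^{O(k)}\cdot O(k)^{O(k^2)}$ overall, follows as you argue.
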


\begin{proof}
There are most $  O ( \min( n^2, q )^2) = n ^{O(1)}$ pairs of colors used in the puzzle. In time $n^{O(1)}$, we can build a table which to any such pair of colors returns the set of pieces which have matching colors. Property \eqref{typ6} implies that  for all pairs of colors this set has cardinal at most $ m = c' k  = O(k)$.

We perform step 1 of the algorithm by listing all the feasible assemblies of $[-k,k]^2$. This list can be computed in time $O(k) ^{k^2} n^{O(k)}$ in the following manner:
\begin{enumerate}[(a)]
\item Enumerate over all possible pieces in the top row and left column of the square. That is, we enumerate all local assembly on $W = (\{-k\} \times [-k,k]) \cup ( [-k,k] \times \{k\})$. 
\item For each feasible local assembly on $W$, we enumerate all pieces that can be placed on the top and left corner of $[-k,k]^2 \backslash W$. It gives the set of feasible assembly on $W' =  W \cup \{ (- k +1, k-1 ) \}$. 
\item We repeat the previous step to $W'$ and proceed sequentially from top to bottom and left to right. 
\end{enumerate}

The output of the algorithm is the enumeration of all feasible assembly of $[-k,k]^2$.  By exhaustive search, the running time of part (a) is $(n^{2} )^{2k} = n^{O(k)}$. For the part (b)-(c), the running time to enumerate all feasible assembly whose restriction to $W$ is fixed is $O(m ^{k^2}) = O(k)^{k^2}$ where $m$ is as above. It corresponds to the  calls in the table which to any pair of colors return the set pieces which have matching colors. Indeed, once the top row and left column are fixed, each new piece has two colors constrained. There are at most $ m^{(k-1)^2}$ calls in this table.

In the process of computing this list of all feasible assemblies,  when a new feasible assembly on $[-k,k]^2$ is found, we update in time $O(1)$, the candidate neighborhood of the central piece.  It follows that step $1$ of the algorithm can be performed in time  $O(k)^{k^2} n^{O(k)}$.

Step 2 is performed in time $O(n^2)$ by a greedy exploration.  The choice of possible cores in step 3 will require at most $O(k^2)$ trials of the remainder steps. In step 4, to reconstruct shell $i+1$,  it first requires a time $O(k n ^2)$ to find on each side, the free edge with unique color. Then, the  reconstruction of the shell will require a time $O(n)$, corresponding to the $ 4 (n - 2k - 2i) $ calls in the table which to any pair of colors return the set pieces which have matching colors. We obtain the claimed running time for the algorithm. \end{proof}

\section{Variants}
\label{sec:fin}

The model that we have studied can be generalized to a model where the jigs have a shape and the pieces are allowed to be rotated. This could be formalized using (oriented) edges as follows. The set of edges $e = (x,y)$ of the grid such that $x \in [n]^2$ is denoted by $E$. The set $E^{\INT}$ is the subset of edges such that both $x$ and $y$ are in $[n]^2$.  It is stable under the involution $ \check \cdot$ defined for every $e = (x,y)$ by $ \check e = (y,x)$. The edges  adjacent to $x \in \dZ^2$ are organized  in counter-clockwise order (right, up, left and down), we set
\begin{equation*}\label{eq:defEx}
x + B = \PAR{ (x,x+(1,0)), (x,x+(0,1)), (x,x-(1,0)), (x,x - (0,1)) }.
\end{equation*}
where  $B= \PAR{ (1,0), (-1,0), (0,1) , (0,-1) }$.  Now, each edge receives a {\em jig} according to a function $\sigma :   E  \to [q]$. The set of jigs $[q]$ is equipped with an involution $\iota : [q] \to [q]$. We interpret $\iota (j) = j'$ as the jigs $j$ and $j'$ match together, see Figure \ref{fig1}. A {\em puzzle} is a then function $\sigma$ such that for all $e \in   E^{\INT}$, 
\begin{equation}\label{eq:puzzle}
\sigma(e) = \iota ( \sigma (  \check e) ).
\end{equation}
The case that we have treated previously corresponds to $\iota$ equal to the identity. 
\begin{figure}[htb]
\centering \scalebox{0.3}{\input{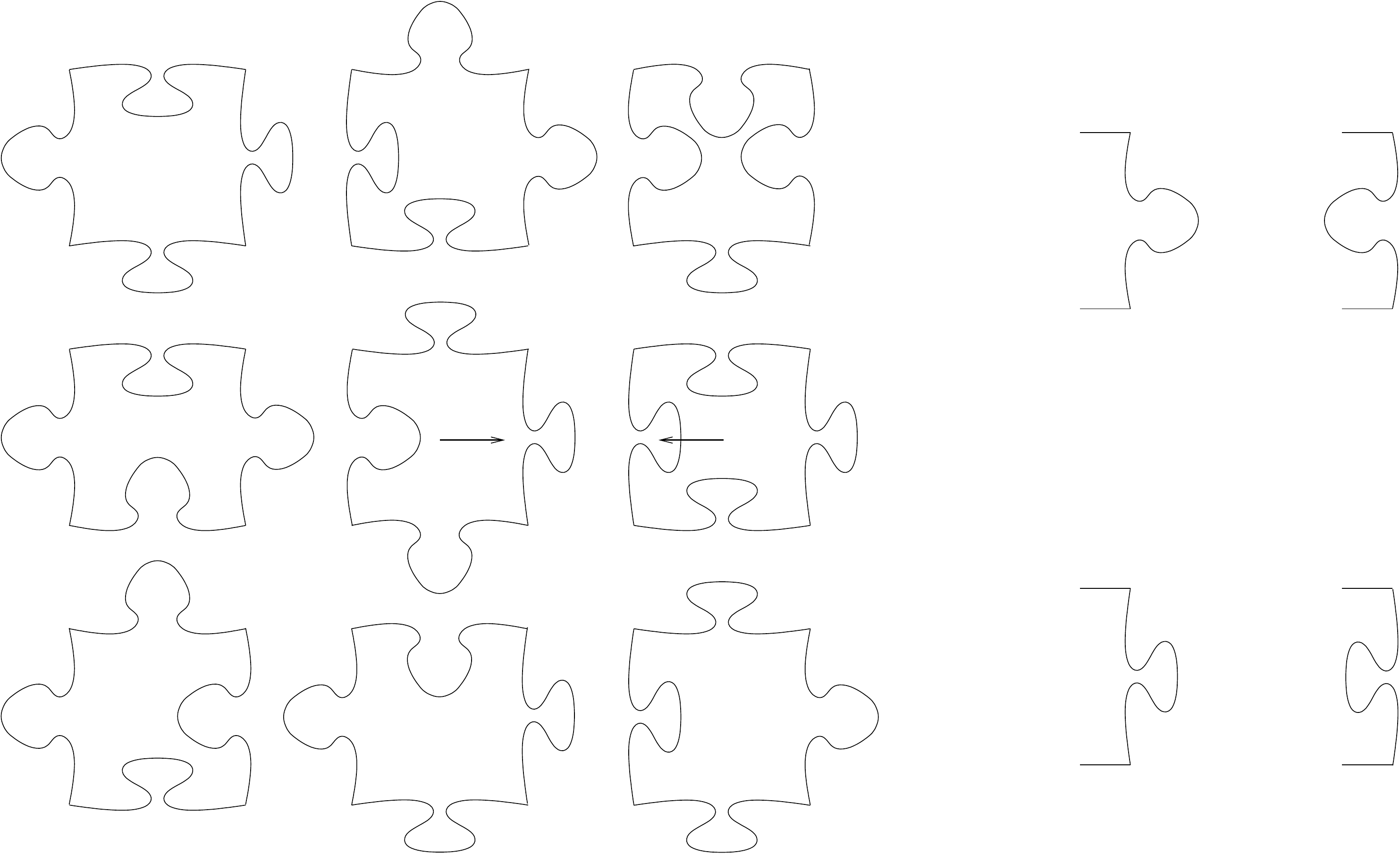_t}}
\caption{A puzzle with $n = 3$, $q =4$ and the involution $\iota$.}\label{fig1}
\end{figure}

We now define the way the pieces can be assembled. The {\em cyclic group} $C_4 \subset S_4$ is the subgroup of permutations generated by $ (1 \, 2 \, 3 \, 4)$. Below, if $a$ is a function on $  E$, $s \in S_4$ and $F = (f_1, \cdots , f_4) \in   E^4$, we set $a(F) = (a (f_1), \cdots , a(f_4))$ and $F_s = (f_{s(1)}, \cdots , f_{s(4)})$. An {\em assembly} $a$ is a permutation on $  E$ which satisfies : 
\begin{enumerate}[(i)]
\item  for every $x \in [n]^2$, there exist a piece $y \in [n]^2$ and $c \in C_4$, such that $a (x + B) = y + B_c$, 
\item
if $ y= (1,1)$, the above permutation $c$ is the identity. 
\end{enumerate}
In words, condition (i) says that piece $y$ is assigned a location $ x \in [n]^2$ and is rotated by an angle multiple of $\pi/2$. By construction the map which to $x$ assigns $y$ is a bijection of $[n]^2$. Condition (ii) fixes a global orientation to the puzzle. We will say that an assembly is {\em feasible} if for all $e \in   E^{\INT}$, 
$$
\sigma ( a (e) ) = \iota  \PAR{ \sigma  ( a ( \check e ) }.  
$$

A feasible assembly is a solution of the puzzle : all pieces are in a position where the jigs match. Note that by definition, the identity is a feasible assembly : it gives back the pieces in their original position.   
We say that a puzzle has {\em unique vertex assembly} if it has only one feasible assembly (note that without condition (ii), it would only be possible to uniquely assemble the puzzle up to  a global rotation by a multiple of  $\pi/2$). 

Observe that, unlike in a usual jigsaw puzzle, the boundary pieces (pieces in $[n]^2 \backslash [2,n-1]^2$) cannot be distinguished from the other pieces. To recover a usual jigsaw puzzle, we may simply consider the subset of assembly which satisfy the extra condition $a(E^ {\INT} ) = E^ {\INT}$ (so that edges on the boundary remain on the boundary).

In this new setting, a {\em random puzzle} is simply obtained by sampling the function $\sigma$ uniformly  on the set of puzzles (functions $\sigma$ which satisfies \eqref{eq:puzzle}).  Hence, up to the constraint \eqref{eq:puzzle}, the jigs are independent and uniformly distributed. Theorem \ref{thm:main} continues to hold on this extended setting. Indeed, it is easy to check that the proof of Theorem \ref{thm:main} continues to work if we adapt the definition of the constraint graph (to accommodate the involution).


\begin{thebibliography}{}

\bibitem[Arratia et~al., 1996]{Arratia1996}
Arratia, R., Martin, D., Reinert, G., and Waterman, M.~S. (1996).
\newblock Poisson process approximation for sequence repeats, and sequencing by
  hybridization.
\newblock {\em J. Comp. Bio.}, 3(3):425--463.

\bibitem[Dyer et~al., 1994]{Dyer1994}
Dyer, M., Frieze, A., and Suen, S. (1994).
\newblock The probability of unique solutions of sequencing by hybridization.
\newblock {\em J. Comp. Bio.}, 1(2):105--110.




\bibitem[Grimmett, 1999]{Grimmett1999}
Grimmett, G. (1999).
\newblock {\em Percolation}, volume 321 of {\em Grundlehren der Mathematischen
  Wissenschaften [Fundamental Principles of Mathematical Sciences]}.
\newblock Springer-Verlag, Berlin, second edition.

\bibitem[Harary, 1974]{Harary1974}
Harary, F. (1974).
\newblock A survey of the reconstruction conjecture.
\newblock In {\em Graphs and combinatorics ({P}roc. {C}apital {C}onf., {G}eorge
  {W}ashington {U}niv., {W}ashington, {D}.{C}., 1973)}, pages 18--28. Lecture
  Notes in Math., Vol, 406. Springer, Berlin.

\bibitem[Kelly, 1957]{Kelly1957}
Kelly, P.~J. (1957).
\newblock A congruence theorem for trees.
\newblock {\em Pacific J. Math.}, 7:961--968.

\bibitem[Motahari et~al., 2013]{Motahari2013}
Motahari, A.~S., Bresler, G., and Tse, D.~N. (2013).
\newblock Information theory of {DNA} shotgun sequencing.
\newblock {\em Information Theory, IEEE Transactions on}, 59(10):6273--6289.



\bibitem[Mossel and Ross, 2015]{MR15}
Mossel, E. and Ross, N (2015).
\newblock Shotgun assembly of labeled graphs.
\newblock Arxiv preprint 1504.07682.




\end{thebibliography}
\end{document}